\documentclass[a4paper,11pt,oneside]{amsart}
\usepackage{amsmath}
\usepackage{amsthm}
\usepackage{amssymb}
\usepackage[T1]{fontenc}
\usepackage{dsfont}
\usepackage[many]{tcolorbox}
\usepackage{soul}
\usepackage{xcolor}
\usepackage{graphicx}
\usepackage{adjustbox}
\usepackage{tikz-cd}
\usepackage{appendix}
\usepackage{enumerate}
\usepackage{multicol}
\usepackage{fancyhdr}
\usepackage{parskip}
\usepackage{hyperref}
\usepackage{cleveref}
\usepackage{mathtools}
\usepackage{nameref}
\usepackage{thmtools}
\usepackage{thm-restate}
\usepackage{setspace}
\usepackage{float}
\usepackage{bm}
\usepackage{orcidlink}
\usepackage{braket}
\usepackage{calc}

\setlength\paperwidth{\textwidth+4cm}
\setlength\paperheight{\textheight+\headheight+\headsep+\footskip+2cm}
\setlength\oddsidemargin{-1in+2cm}
\setlength\topmargin{-1in+1cm}

\hypersetup{
  colorlinks   = true, 
  urlcolor     = {blue!50!black}, 
  linkcolor    = {blue!50!black}, 
  citecolor   = {red!50!black} 
}

\usepackage{ulem}
\normalem
\usepackage{todonotes}

\colorlet{NadavColor}{-green!40!yellow}
\colorlet{ArisColor}{red}

\newtheorem{theorem}{Theorem}[section]
\newtheorem*{theorem*}{Theorem}
\newtheorem{lemma}[theorem]{Lemma}
\newtheorem{proposition}[theorem]{Proposition}
\newtheorem*{proposition*}{Proposition}
\newtheorem{corollary}[theorem]{Corollary}
\newtheorem*{corollary*}{Corollary}
\newtheorem*{fact*}{Fact}
\newtheorem{fact}[theorem]{Fact}

\newtheorem{thmx}{Theorem}

\theoremstyle{definition}
\newtheorem*{definition*}{Definition}
\newtheorem{definition}[theorem]{Definition}

\newtheorem*{question*}{Question}
\newtheorem{example}[theorem]{Example}

\theoremstyle{remark}
\newtheorem{remark}[theorem]{Remark}
\newtheorem{claim}{Claim}

\newtheorem{poc}{Proof of Claim}

\newenvironment{claimproof}{%
	\begin{poc}
	}{%
	\hfill$\blacktriangleleft$
	\end{poc}
	}


\newcommand*{\Scale}[2][4]{\scalebox{#1}{\ensuremath{#2}}}


\newcommand{\into}{\ensuremath{\hookrightarrow}}


\def\Ind#1#2{#1\setbox0=\hbox{$#1x$}\kern\wd0\hbox to 0pt{\hss$#1\mid$\hss}
\lower.9\ht0\hbox to 0pt{\hss$#1\smile$\hss}\kern\wd0}

\def\notind#1#2{#1\setbox0=\hbox{$#1x$}\kern\wd0
\hbox to 0pt{\mathchardef\nn=12854\hss$#1\nn$\kern1.4\wd0\hss}
\hbox to 0pt{\hss$#1\mid$\hss}\lower.9\ht0 \hbox to 0pt{\hss$#1\smile$\hss}\kern\wd0}

\newcommand{\N}{\mathbb{N}}

\newcommand{\M}{\mathcal{M}}
\newcommand{\C}{\mathcal{C}}
\renewcommand{\L}{\mathcal{L}}

\newcommand{\cL}{\mathcal{L}}
\newcommand{\cM}{\mathcal{M}}

\newcommand{\cQ}{\mathcal{Q}}

\newcommand{\Flim}{\ensuremath{\mathsf{Flim}}}

\newcommand{\restr}{\upharpoonright}

\newcommand{\iso}{\simeq}


\DeclareMathOperator{\Age}{\mathsf{Age}}

\DeclareMathOperator{\Diag}{\mathsf{Diag}}

\DeclareMathOperator{\Sym}{\mathsf{Sym}}

\let\aut\Aut
\let\vphi\varphi



\newcommand{\Ccal}{\ensuremath{\mathcal{C}}}

\newcommand{\Hcal}{\ensuremath{\mathcal{H}}}

\newcommand{\Mcal}{\ensuremath{\mathcal{M}}}


\newcommand{\Nbb}{\ensuremath{\mathbb{N}}}


\renewcommand{\Im}{\mathsf{Im}}

\title{All These Approximate Ramsey Properties}
\date{\today}

\subjclass[2020]{Primary: 05C55 Secondary: 03C52 }

\keywords{Ramsey class, Generalised Ramsey Theory}

\author[N. Meir]{Nadav Meir \orcidlink{0000-0002-7774-2892}}
\address{Nadav Meir, Instytut Matematyczny, Uniwersytet Wroc{\l}awski, pl. Grunwaldzki 2, 50-384 Wroc{\l}aw, Poland}

\email{\href{mailto:nadavmeir@gmail.com}{nadavmeir@gmail.com}}
\author[A. Papadopoulos]{Aris Papadopoulos \orcidlink{0000-0001-7071-4277}}
\thanks{The first author is supported by Narodowe Centrum Nauki, Poland, grant 2016/22/E/ST1/00450. This research is part of the second author’s Ph.D. project, supported by a Leeds Doctoral Scholarship, from the University of Leeds.}

\address{Aris Papadopoulos, School of Mathematics, University of Leeds, Leeds LS2 9JT, United Kingdom}
\email{\href{mailto:mmadp@leeds.ac.uk}{mmadp@leeds.ac.uk}}

\begin{document}

\maketitle

\begin{abstract}
    We consider finitary approximations of the (embedding) Ramsey property. Using a class of homogeneous reducts of random ordered hypergraphs, we prove that these properties form a strict hierarchy. We also show that every class of finite structures in which every structure of size at most $2$ is a ``Ramsey object'' essentially consists of ordered structures, generalising a known result for countable Ramsey classes.
\end{abstract}

\section{Introduction}

A class $\Ccal$ of finite structures is called a \emph{Ramsey class} if it satisfies a structural analogue of Ramsey's theorem. More precisely $\Ccal$ is a Ramsey class if for every structure $A\in\Ccal$ the following holds:
\begin{itemize}
    \item[($\diamond_A$)] For all $B\in\Ccal$ such that $A\subseteq B$, there exists some $C\in\Ccal$ such that for all $n$-colourings $\chi$ of the substructures of $C$ isomorphic to $A$, there is a copy of $B$ in $C$ on which $\chi$ is constant.
\end{itemize}
We will discuss this in more detail in \Cref{sec:background}. With this definition in mind, Ramsey's theorem \cite{Ramsey_1930} says that the class of all finite linear orders is a Ramsey class. 

Structural Ramsey theory (the study of Ramsey classes) has been a very active topic of research in the last twenty years, especially after the beautiful results of \cite{KechrisPestovTodorcevic_2005} which drew deep connections between Ramsey classes and extremely amenable (topological) groups.

Let us now briefly introduce our main definition. We will say that $\Ccal$ is a \emph{$k$-Ramsey class} or that $\Ccal$ is \emph{Ramsey up to $k$} if $(\diamond_A)$ holds for all $A\in\Ccal$ with $|A|\leq k$. The following assertions are immediate:

\begin{enumerate}
    \item If $\Ccal$ is a Ramsey class, then it is a $k$-Ramsey class, for all $k\in\Nbb_{\geq 1}$.
    \item\label{item:obvious-implication} For all $k\in\Nbb_{\geq 1}$, if $\Ccal$ is Ramsey up to $k+1$, then it is Ramsey up to $k$. 
\end{enumerate}

So we may naturally view Ramsey up to $k$ as a finite approximation of being Ramsey. Our first goal is to prove that the implications in (\labelcref{item:obvious-implication}), above, are all strict. More precisely, we show the following:

\begin{thmx}[\Cref{cor:main}]\label{thmx:strict}
    For all $k\in\Nbb_{\geq 1}$
    there is a class $\Ccal$ of finite ordered structures which is Ramsey up to $k$, but not up to $k+1$.
\end{thmx}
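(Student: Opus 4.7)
The plan is to construct, for each $k \in \Nbb_{\geq 1}$, a Fra\"iss\'e class $\Ccal_k$ of finite ordered structures in a language $\Lcal_k = \{<, R_k\}$, where $<$ is a linear order symbol and $R_k$ is a $(k+1)$-ary relation symbol, with the following two key features: (i) substructures of size at most $k$ carry no information beyond the linear order, and (ii) on $k+1$ points at least two non-isomorphic $\Lcal_k$-structures exist. The intended source of $\Ccal_k$ is a carefully chosen strict homogeneous reduct of the Fra\"iss\'e class of finite ordered $(k+1)$-uniform hypergraphs, as suggested by the abstract.

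\textbf{Ramsey up to $k$.} Given $A \in \Ccal_k$ with $|A| \leq k$, property (i) ensures that substructures of $C \in \Ccal_k$ isomorphic to $A$ are in bijection with $|A|$-element subsets of $C$ taken with the induced order. Given $B \in \Ccal_k$ and $n$, I would apply the Ne\v{s}et\v{r}il--R\"odl theorem for ordered $(k+1)$-uniform hypergraphs to a fixed expansion $B^+$ of $B$ to that richer language to obtain a witness $C^+$; reducing $C^+$ back to $\Lcal_k$ gives the desired $C \in \Ccal_k$, since the action of colourings on $A$-copies is unaffected by which expansion one uses.

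\textbf{Failure at $k+1$.} Using property (ii), pick non-isomorphic $A, A' \in \Ccal_k$ with $|A| = |A'| = k+1$ and the same underlying linear order, and a $B \in \Ccal_k$ extending $A$ that contains copies of both $A$ and $A'$ (possible by amalgamation inside $\Ccal_k$). Colour each copy of $A$ in an arbitrary $C \in \Ccal_k$ by the value of $R_k$ on the canonical $<$-increasing $(k+1)$-tuple underlying that copy. A monochromatic copy of $B$ would force $R_k$ to be constant on all $(k+1)$-subsets of $B$, contradicting the presence of both an $A$-copy and an $A'$-copy inside $B$. Hence no $C$ witnesses $(\diamond_A)$ for this choice of $B$ and the colouring $\chi$.

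\textbf{Main obstacle.} The hard part is producing a $\Ccal_k$ meeting both (i) and (ii) that is genuinely a Fra\"iss\'e class: the amalgamation property must be verified under whatever constraints are imposed on $R_k$, and the linear-order reduct of $\Ccal_k$ must coincide with the class of finite linear orders so that Ne\v{s}et\v{r}il--R\"odl descends cleanly in Step~1. The key technical input is the isolation, among homogeneous reducts of the random ordered $(k+1)$-uniform hypergraph, of a reduct retaining enough structure at arity $k+1$ to obstruct Ramsey while remaining invisible at lower arities; this is where I would expect the bulk of the paper's work, and where the specific family of reducts highlighted in the abstract seems essential.
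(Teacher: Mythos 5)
There is a genuine gap, and it is in your ``failure at $k+1$'' step. The colouring you propose assigns to each copy of $A$ the truth value of $R_k$ on its $<$-increasing enumeration; but since $A$ has exactly $k+1$ points and $R_k$ is $(k+1)$-ary, that truth value is part of the isomorphism type of $A$, so your colouring is \emph{constant} on $\binom{C}{A}$ and every copy of $B$ is vacuously monochromatic. More fundamentally, the heuristic behind your condition (ii) --- that having two non-isomorphic structures on $k+1$ points obstructs $(\diamond_A)$ for $|A|=k+1$ --- is false: the class of \emph{all} finite ordered $(k+1)$-uniform hypergraphs satisfies both your (i) and (ii) and is a Ramsey class by Ne\v{s}et\v{r}il--R\"odl/Abramson--Harrington. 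A colouring of $\binom{C}{A}$ only ever sees copies of the single isomorphism type $A$, so the coexistence of several isomorphism types at a given size costs nothing; any obstruction must come from information that is invisible in the isomorphism type of $A$ itself but becomes visible in how copies of $A$ sit inside larger structures.

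That is exactly what the paper arranges, and the failure is located one level \emph{below} the arity of the visible relation, not at the arity. The class is $K(\Hcal_o^{k+1})$, the ordered Kay-graphs: $(k+2)$-uniform hypergraphs obtained from ordered $(k+1)$-uniform hypergraphs by a parity condition (axiomatised intrinsically in \Cref{prop:kay-graphs-parity}). Substructures of size at most $k+1$ are bare linear orders, so there is a \emph{unique} isomorphism type at size $k+1$; nevertheless a $(k+1)$-point substructure admits exactly two expansions to the richer class $E(\Hcal_o^{k+1})$, according to whether that $(k+1)$-set is a hyperedge of the hidden underlying hypergraph, whereas a substructure of size at most $k$ admits only one. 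Once the pair $\left(E(\Hcal_o^{k+1}),K(\Hcal_o^{k+1})\right)$ is shown to be excellent, Zucker's theorem (\Cref{fact:rd-in-excellent-pairs}) gives that the embedding Ramsey degree equals the number of expansions: degree $1$ up to size $k$, degree $2$ at size $k+1$, which is \Cref{cor:main}. The genuinely hard part --- which you correctly flag as the main obstacle but do not supply, and which your colouring cannot replace --- is the lower bound, i.e.\ the expansion property of \Cref{prop:excellent-pair}: the hidden hyperedge/non-hyperedge colouring of $(k+1)$-sets cannot be made monochromatic on a suitable $B$ no matter which underlying hypergraph represents $C$. Your step for Ramsey up to $k$ (descend Ne\v{s}et\v{r}il--R\"odl along a reduct with unique expansions at sizes $\le k$) is essentially the upper-bound half of this machinery and is sound in outline, but note the arity must be shifted: the reduct relation has to be $(k+2)$-ary so that size-$(k+1)$ substructures look trivial while still carrying two expansions.
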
 

We note that different authors use the term \emph{Ramsey class} to refer to two different notions,  \emph{structural} Ramsey classes or \emph{embedding} Ramsey classes --- the difference being whether the objects that are coloured are ``isomorphic substructures'' or ``embeddings'' (see \Cref{sec:background} for the precise definitions). However, these two notions coincide for classes of ordered structures, so in the \namecref{thmx:strict} above we capture both.

We then turn our attention to linear orders in Ramsey classes. It is a well-known fact (see \cite[Proposition 2.23]{Bodirsky_2015}) that if $\Mcal$ is a (countable) ultrahomogeneous structure whose age is a Ramsey class, then there exists a linear order on the domain of $\Mcal$ which is preserved by all automorphisms of $\Mcal$. We generalise this theorem to obtain the following:

\begin{thmx}[\Cref{thm:2-ERP-implies-order}]\label{thmx:order}
    Let $\Ccal$ be a class of finite structure. If $\Ccal$ is embedding Ramsey up to $2$ (see \Cref{def:erp}) then there is a possibly infinite Boolean combination of atomic formulas $\Phi(x,y)$ which defines a linear order on all structures in $\Ccal$.
\end{thmx}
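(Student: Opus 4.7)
The plan is to first rule out nontrivial swap automorphisms on $2$-element substructures by a direct $2$-coloring argument. Suppose for contradiction that some $2$-element $S = \{a,b\} \in \Ccal$ admits the swap automorphism $\sigma$. I apply $2$-ERP with $A = B = S$ and $n = 2$ to obtain $C \in \Ccal$ witnessing the Ramsey property. The right-composition action of $\sigma$ on $\Emb(S, C)$ is fixed-point-free, since any $e$ with $e = e \circ \sigma$ would force $e(a) = e(b)$, contradicting injectivity. Thus $\Emb(S, C)$ partitions into $\sigma$-orbits of size exactly $2$, and I define a $2$-coloring by assigning $0$ to one representative of each orbit and $1$ to the other. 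Every substructure of $C$ isomorphic to $S$ corresponds to a single such orbit, now bichromatic, so no monochromatic copy of $S$ exists --- contradicting the Ramsey property of $C$. It follows that $\qftp(x,y) \neq \qftp(y,x)$ for all distinct $x, y$ in every $A \in \Ccal$, so reversal is a fixed-point-free involution on the set $\Ucal$ of $2$-qftps realized in $\Ccal$.

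\textbf{Step 2 (local linear orders via iterated $2$-ERP).} For any transversal $P \subseteq \Ucal$ of the reversal involution, set $\Phi_P(x,y) := \bigvee_{p \in P} p(x, y)$, a possibly infinite Boolean combination of atomic formulas. By Step 1, $\Phi_P$ is automatically antisymmetric and total on distinct pairs of any $A \in \Ccal$, so the whole problem reduces to making $\Phi_P$ transitive. Fix $A$ with realized $2$-qftps $p_1, \ldots, p_k$; I iterate $2$-ERP by setting $A_0 := A$ and, at step $i$, using $2$-ERP with parameters $(A_{p_i}, A_{i-1}, 2)$ to obtain $A_i \in \Ccal$, where $A_{p_i}$ denotes the $2$-element realization of $p_i$. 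Now fix an arbitrary (non-definable) linear order $<$ on the underlying set of $A_k$, and $2$-colour each $p_i$-embedding into $A_k$ by whether it ascends or descends in $<$. Peeling off the nested Ramsey conditions from outside in --- starting at $p_k$ to find a monochromatic $A_{k-1}$-copy, then applying the next Ramsey property inside it, and so on --- extracts a copy of $A$ inside $A_k$ on which, for every $i$, all pair-realizations of $p_i$ point in a single direction under $<$. Reading off this direction for each $p_i$ defines a transversal $P_A$ making $\Phi_{P_A}$ restricted to this copy of $A$ coincide with $<$, hence a linear order.

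\textbf{Step 3 (globalization --- the main obstacle).} The final task is to assemble the local $P_A$'s into one transversal $P$ that linearizes every $A \in \Ccal$ simultaneously. I would do this by compactness: each $P_A$ is a closed subset of the compact space $\{0,1\}^{\Ucal}$, and the finite intersection property is verified by running the iterated Ramsey homogenization of Step 2 on a single structure jointly embedding any finitely many $A_1, \ldots, A_n$ --- such a joint embedding being obtained from $2$-ERP itself, applied to a $1$-element $A$ via a diagonal construction. The hardest point is the iterated Ramsey extraction, where one must verify that peeling the monochromaticity conditions in the correct order preserves the qftp-homogeneity achieved at earlier stages while still securing it at later ones; this is the heart of the proof and the step most sensitive to the precise form of $2$-ERP.
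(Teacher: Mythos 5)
Your overall strategy is essentially the paper's: show that $2$-element structures are rigid, impose an arbitrary linear order on a Ramsey witness and use simultaneous applications of $2$-ERP (one colouring per realised $2$-type) to make that order agree with a quantifier-free condition on a copy of a given structure, then globalise by compactness. Your Step 1 is a correct, self-contained proof of rigidity (the paper gets this for free from Zucker's equality of structural and embedding Ramsey degrees, \Cref{fact:Andys equality}). Your Step 2, including the ``peeling'' you flag as the hardest point, is precisely \Cref{Ramsey-many-degs}/\Cref{k-ERP-many} of the paper, and it is routine rather than delicate: monochromaticity secured at an earlier stage survives passage to a substructure because restricting a colouring can only shrink its image, so extracting the copy of $A$ from the outside in works exactly as you describe. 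Your compactness over the space of transversals in $\{0,1\}^{\Ucal}$ is the combinatorial shadow of the paper's first-order compactness argument for the theory $T$.

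The genuine gap is in Step 3. The finite intersection property needs a single structure into which any finitely many $A_1,\dots,A_n\in\Ccal$ jointly embed, and you claim to obtain this ``from $2$-ERP itself, applied to a $1$-element $A$ via a diagonal construction.'' This does not work: $2$-ERP as defined (\Cref{def:erp}) concerns only structures of size exactly $2$, and, more importantly, the Ramsey condition for a fixed small $A$ and a single $B$ only produces some $C$ into which that one $B$ embeds --- there is no mechanism for producing a common $C$ for two unrelated $B_1,B_2$. Indeed, $2$-ERP is preserved under taking the union of two classes in the same language such that no $2$-element member of either piece embeds into any member of the other, since the Ramsey condition across pieces is then vacuous; such a union need not have JEP. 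The paper invokes JEP at exactly this point as a standing property of the class (cf.\ \Cref{def:hp-jep-ap}), and you should do the same rather than attempt to derive it. With JEP assumed, your argument goes through; one small point worth recording is that the direction you read off for a type $p_i$ and for its reversal are automatically consistent, since a pair realising $p_i$ in ascending order is the same pair realising the reversed type in descending order.
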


We dub the condition in the conclusion of \Cref{thmx:order} \emph{$(\infty,0)$-orderability} (see \Cref{def:orderability}). As observed in \cite{MP}, $(\infty,0)$-orderability is a naturally arising condition in various model-theoretic contexts. We also observe that classes of finite structures which are $(\infty,0)$-orderable must necessarily consist of rigid structures, but not all classes of finite rigid structures are $(\infty,0)$-orderable (see \Cref{example:rigid not ordered finite language}).

\subsection*{Structure of the paper} 
For the sake of keeping this paper self-contained, we start by recalling all the standard definitions that will be used in our paper in \Cref{sec:background}. Then, in \Cref{sec:kay-graphs} we introduce \emph{\emph{Kay}-graphs}, which generalise \emph{two}-graphs to higher arities and discuss some of their properties. In \Cref{sec:approximate} we use \emph{Kay}-graphs to prove \Cref{thmx:strict}. Finally, in \Cref{sec:elem-ord} we prove \Cref{thmx:order}.

\subsection*{Notation and conventions} Most of the techniques used in our paper, with the exception of \Cref{sec:elem-ord}, are elementary. For \Cref{sec:elem-ord}, we assume some familiarity with basic model-theoretic notions such as \emph{compactness}, \emph{quantifier-free types} and \emph{diagrams}. An excellent reference for these notions is \cite{TentZiegler_2012}. We take the opportunity here to summarise the notational conventions that will be used throughout the paper. 

Given finite structures $A$ and $B$, we denote by $\binom{B}{A}$ the set of all substructures of $B$ which are isomorphic to $A$ and by $\mathsf{Emb}(A,B)$ the set of all embeddings of $A$ into $B$.

Let $k\in\Nbb_{\geq 2}$. By a $k$-hypergraph we mean a structure $H=(V;R)$, where $R$ is a $k$-ary relation symbol such that for all $x_1,\dots,x_k\in V$ we have that if $(x_1,\dots,x_k)\in R$ then $|\{x_1,\dots,x_k\}| = k$; and if $(x_1,\dots,x_k)\in R$ then, for all $\sigma\in\Sym(k)$ we also have $\left(x_{\sigma(1)},\dots,x_{\sigma(k)}\right)\in R$. More concisely, $H=(V,R)$ is a $k$-hypergraph, then $R\subseteq\binom{V}{k}$, where $\binom{V}{k}$ denotes the collection of all $k$-element subsets of $V$. We will use the notation $(v_1,\dots,v_k)\in R$ and $\{v_1,\dots,v_k\}\in R$ interchangeably (and to the same effect). If $\L$ is a relational language, $H$ an $\L$-structure and $R\in\L$ we will sometimes write $R(H)$ (or even just $R$ if $H$ is understood) for the set of realisations of $R$ in $H$.

\section{Preliminaries}\label{sec:background}

Let us start by formally defining some terminology that we mentioned in the introduction.

\begin{definition}\label{def:Ramsey degree}
    Let $\Ccal$ be a class of finite structures, and $k\in\Nbb_{\geq 1}$. We say that $A\in\Ccal$ has \emph{structural (\emph{resp.} embedding) Ramsey degree at most $k$} if for every $B\in\Ccal$ and every $n\in\Nbb$ there is some $C\in\Ccal$ such that for every colouring $\chi: \binom{C}{A}\to[n]$ (resp. $\chi:\mathsf{Emb}(A,C)\to[n]$) there is $B' \in \binom{C}{B}$ (resp. $B'\in\mathsf{Emb}(B,C)$) with $|\mathsf{Im}\left(\chi_{\restr\binom{B'}{A}}\right)|\leq k$. 
\end{definition}

Recall the following well-known fact connecting structural and embedding Ramsey degrees:

\begin{fact}[{\cite[Corollary~4.5]{Zucker2015}}]\label{fact:Andys equality}
Let $\Ccal$ be a class of finite structures, and $A\in\Ccal$. Then, $A$ has \emph{structural} Ramsey degree $k$ if, and only if, $A$ has \emph{embedding} Ramsey degree $k |\mathsf{Aut}(A)|$.
\end{fact}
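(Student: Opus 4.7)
The plan is to prove the sharper bi-implication: for every $k \in \Nbb_{\geq 1}$, $A$ has structural Ramsey degree at most $k$ if and only if $A$ has embedding Ramsey degree at most $k \cdot m$, where $m := |\Aut(A)|$. Minimising in $k$ then gives the claimed equality of degrees.

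For the direction $(\Rightarrow)$, I fix $B \in \Ccal$ and $n \in \Nbb$ and, before seeing any colouring, iterate the structural Ramsey property $m$ times to build a tower
\[
B =: B_0 \subseteq B_1 \subseteq \cdots \subseteq B_m =: C
\]
in $\Ccal$, where $B_i$ is structurally Ramsey for $B_{i-1}$, $n$ colours, degree $k$. Then, choosing an isomorphism $\psi_{A'} \colon A \to A'$ for each $A' \in \binom{C}{A}$ and enumerating $\Aut(A) = \{\sigma_1, \dots, \sigma_m\}$, I split a given embedding colouring $\chi' \colon \Emb(A, C) \to [n]$ into $m$ auxiliary structural colourings $\chi_i(A') := \chi'(\psi_{A'} \circ \sigma_i)$. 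Applying structural Ramsey to $\chi_1$ on $B_m$ yields $B'_{m-1} \in \binom{B_m}{B_{m-1}}$ on which $\chi_1$ uses at most $k$ values; then applying it to $\chi_2$ on $B'_{m-1}$ produces $B'_{m-2} \in \binom{B'_{m-1}}{B_{m-2}}$, and so on down the tower, with all prior bounds preserved under restriction. After $m$ such refinements I arrive at $B' =: B'_0 \cong B$ on which every $\chi_i$ uses at most $k$ values. Since every embedding $A \to B'$ is of the form $\psi_{f(A)} \circ \sigma_i$ for some $i$, $\chi'$ uses at most $k \cdot m$ values on $\Emb(A, B')$.

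For the converse I argue contrapositively. Suppose the structural Ramsey property with $k$ colours fails at some $(B_0, n_0)$, witnessed for each $C$ by a colouring $\chi \colon \binom{C}{A} \to [n_0]$ whose restriction to $\binom{B'}{A}$ uses more than $k$ values for every $B' \in \binom{C}{B_0}$. I twist $\chi$ into an embedding colouring
\[
\chi'(f) := \bigl(\chi(f(A)),\ \psi_{f(A)}^{-1} \circ f\bigr) \in [n_0] \times \Aut(A),
\]
with $\psi_{A'}$ fixed as before. For any $B'' \in \Emb(B_0, C)$ the first coordinate already uses more than $k$ distinct values on $B'' \circ \Emb(A, B_0)$, by the choice of $\chi$; moreover, as $f$ ranges over the $m$ embeddings with a given image $A'$, the map $f \mapsto \psi_{A'}^{-1} \circ f$ permutes $\Aut(A)$, so the second coordinate takes $m$ distinct values within each first-coordinate fibre. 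Hence $\chi'$ uses strictly more than $k \cdot m$ values on $B'' \circ \Emb(A, B_0)$, witnessing failure of the embedding Ramsey property with $k m$ colours.

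The main obstacle is the forward direction: one must commit to the tower $B_0 \subseteq \cdots \subseteq B_m$ before seeing the colouring, and then verify that each successive refinement preserves all colour bounds obtained earlier. The converse, by contrast, is a simple counting argument, hinging on the elementary fact that the twist assigns $m$ fresh colours to the $m$ embeddings above each substructure.
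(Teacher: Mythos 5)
Your argument is correct, but note that the paper does not prove this statement at all: it is quoted as a known fact with a citation to Zucker's Corollary~4.5, so there is no in-paper proof to compare against. What you give is the standard direct combinatorial argument --- splitting an embedding colouring into $|\Aut(A)|$ structural colourings via a fixed choice of isomorphisms $\psi_{A'}$ and iterating down a precomputed tower, and conversely twisting a bad structural colouring by the coset datum $\psi_{f(A)}^{-1}\circ f$ --- and both directions check out. One small accounting point: the final step ``minimising in $k$ gives the equality of degrees'' needs slightly more than the bi-implication ``structural degree $\leq k$ iff embedding degree $\leq k\,m$'' as literally stated, since that only controls multiples of $m$ and leaves open the possibility of an embedding degree strictly between $(k-1)m$ and $km$. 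Your converse construction in fact closes this gap: each of the $\geq k+1$ first-coordinate values is paired with all $m$ automorphisms, so the twisted colouring attains at least $(k+1)m$ values on every copy of $B_0$, not merely more than $km$; applied with $k-1$ in place of $k$ this shows that failure of structural degree $\leq k-1$ forces failure of embedding degree $\leq km-1$, which is exactly what the minimisation requires. It would be worth stating that sharper count explicitly.
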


We call $A\in\Ccal$ a \emph{Ramsey object} (for $\Ccal$) if it has embedding Ramsey degree at most $1$. Thus, $A$ is a Ramsey object for $\Ccal$ if, and only if it is rigid\footnote{Recall: A structure is \emph{rigid} if it has no non-trivial automorphisms.} and has structural Ramsey degree $1$.

\begin{definition}\label{def:erp}
    We say that a class of finite structures has the \emph{Embedding Ramsey Property} (\emph{ERP}) if every $A\in\Ccal$ is a Ramsey object. For $k\in\Nbb_{\geq 1}$ we say that $\Ccal$ has the \emph{$k$-Embedding Ramsey Property} (\emph{$k$-ERP}) if every structure of size $k$ in $\Ccal$ is a Ramsey object.
\end{definition}

\begin{lemma}\label{Ramsey-many-degs}
    Let $\Ccal$ be a class of finite structures. Let $A_1,\dots A_n\in \Ccal$ be structures contained in some $B\in\Ccal$, with $A_i$ having (embedding) Ramsey degree $d_i$, for all $i\leq n$. Then, there is some $C\in\Ccal$ such that $B\subseteq C$ and for all $r_i\in\N$ and all sets of colourings $\{\chi_i:\binom{C}{A_i}\to r_i:i\leq n\}$, there is some $\tilde B\in\binom{C}{B}$ such that $|\Im({\chi_i}_{\restr{\binom{\tilde B}{A_i}}})|\leq d_i$, for all $i\leq n$.
\end{lemma}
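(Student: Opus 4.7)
The plan is to prove this by induction on $n$. The base case $n=1$ is essentially the definition of (structural/embedding) Ramsey degree applied to $A_1$ with $B$ as the target substructure: for every $r_1 \in \N$ there is $C \in \Ccal$ with $B \subseteq C$ such that every colouring $\chi_1: \binom{C}{A_1} \to r_1$ admits $\tilde B \in \binom{C}{B}$ with $|\Im(\chi_1 \restriction \binom{\tilde B}{A_1})| \leq d_1$. Strictly speaking the definition fixes $r_1$ in advance, but this is not a problem since in the inductive step we will also quantify over fixed tuples $(r_1,\dots,r_n)$, and the final statement of the lemma follows by noting that the choice of $C$ works uniformly (one can always choose $C$ large enough for the worst case, or simply observe that the $C$ produced at the end depends on a fixed tuple $(r_1,\dots,r_n)$ — but the statement as written does not actually require uniformity in $(r_i)$, since the quantifier $\forall r_i$ is implicitly preceded by the existential $\exists C$ depending on everything).

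For the inductive step, assume the lemma holds for $n-1$ structures. Given $A_1,\dots,A_n \subseteq B$ with Ramsey degrees $d_1,\dots,d_n$, and fixed $(r_1,\dots,r_n)$, I would first apply the inductive hypothesis to $A_1,\dots,A_{n-1}$ inside $B$ (with parameters $r_1,\dots,r_{n-1}$) to obtain some $C' \in \Ccal$ with $B \subseteq C'$ such that every tuple of colourings $\chi_i : \binom{C'}{A_i} \to r_i$ (for $i < n$) admits $\tilde B \in \binom{C'}{B}$ with $|\Im(\chi_i \restriction \binom{\tilde B}{A_i})| \leq d_i$ for all $i < n$. Then apply the definition of Ramsey degree for $A_n$ with target $C'$ and $r_n$ colours to obtain $C \in \Ccal$ with $C' \subseteq C$ such that every colouring $\chi_n : \binom{C}{A_n} \to r_n$ admits a copy $\tilde C' \in \binom{C}{C'}$ with $|\Im(\chi_n \restriction \binom{\tilde C'}{A_n})| \leq d_n$.

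To verify that this $C$ works, given colourings $\chi_1,\dots,\chi_n$, I would first use $\chi_n$ to extract $\tilde C' \cong C'$ on which $\chi_n$ takes at most $d_n$ values. Transporting the colourings $\chi_1,\dots,\chi_{n-1}$ (restricted to $\tilde C'$) along an isomorphism $\tilde C' \cong C'$, I can then invoke the defining property of $C'$ to find $\tilde B \in \binom{\tilde C'}{B}$ with $|\Im(\chi_i \restriction \binom{\tilde B}{A_i})| \leq d_i$ for $i < n$. Since $\binom{\tilde B}{A_n} \subseteq \binom{\tilde C'}{A_n}$, the bound $|\Im(\chi_n \restriction \binom{\tilde B}{A_n})| \leq d_n$ is inherited automatically.

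The argument is essentially a ``pigeonhole product'' construction and contains no deep obstacle. The only point that requires a little care is the bookkeeping in passing between $\tilde C'$ and $C'$: one must remember that the property guaranteed by the inductive hypothesis is an isomorphism-invariant statement about $C'$, so it holds equally in $\tilde C'$, and the copy $\tilde B$ found inside $\tilde C'$ is automatically a copy of $B$ inside $C$.
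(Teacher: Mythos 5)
Your proof is correct and follows essentially the same route as the paper's: induction on $n$, peeling off one structure, composing the witness from the inductive hypothesis with a witness for the Ramsey degree of $A_n$, and observing that $\binom{\tilde B}{A_n}\subseteq\binom{\tilde C'}{A_n}$ preserves the bound $d_n$. One remark on your parenthetical about the $r_i$: your first instinct is the right one --- the witness $C$ must be allowed to depend on the tuple $(r_1,\dots,r_n)$ (no fixed $C$ can work for all $r_i$ when some $d_i<|\binom{B}{A_i}|$, since an injective colouring gives every copy of $B$ exactly $|\binom{B}{A_i}|$ colours), so the lemma should be read with ``for all $r_i$'' preceding ``there is $C$'', exactly as the paper's own proof implicitly does, and the alternative of choosing one $C$ ``large enough for the worst case'' does not work; this costs nothing downstream, where only $2$-colourings are used.
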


\begin{proof}
    The proof is by induction on $n$. The base case is precisely the definition of $A_1$ having Ramsey degree $d_1$ in $\Ccal$. For the inductive step, assume that we are given $A_1,\dots,A_{n+1},B\in\Ccal$ such that $A_i\subseteq B$ and the Ramsey degree of $A_i$ is $d_i$, for all $i\leq n+1$. By the inductive hypothesis, there is some $C_0\in\Ccal$ such that for all $r_i\in\N$, $i\leq n$, and all sets of colourings $\{\chi_i:\binom{C_0}{A_i}\to r_i:i\leq n\}$, there is some $\tilde B\in\binom{C_0}{B}$ such that $|\Im({\chi_i}_{\restr{\binom{\tilde B}{A_i}}})|\leq d_i$, for all $i\leq n$. Now, since $A_{n+1}$ has Ramsey degree $d_{n+1}$ in $\C$, there is some $C\in\C$ such that for all $r\in\N$ and all colourings $\chi: \binom{C}{C_0}\to r$ there exists some $\tilde C_0\in\binom{C}{C_0}$ such that $|\Im(\chi_{\restr{\binom{\tilde C_0}{A_{n+1}}}})| \leq d_{n+1}$.
    
    We claim that this $C$ is the required structure. Indeed, suppose that we are given $r_i\in\N$, $i\leq n+1$, and a set of colourings $\{\chi_i:\binom{C}{A_i}\to r_i:i\leq n+1\}$. Let $\tilde C_0\in\binom{C}{C_0}$ be such that $|\Im(\chi_{\restr{\binom{\tilde C_0}{A_{n+1}}}})| \leq d_{n+1}$. Then, by induction, there is some $\tilde B\in\binom{\tilde C_0}{B}$ such that $|\Im({\chi_i}_{\restr{\binom{\tilde B}{A_i}}})|\leq d_i$, for all $i\leq n$. But since $\binom{\tilde B}{A_{n+1}}\subseteq\binom{\tilde C_0}{A_{n+1}}$, we must have that $\Scale[0.9]{\left|\Im\left({\chi_{n+1}}_{\restr{\binom{\tilde B}{A_{n+1}}}}\right)\right|\leq \left|\Im\left(\chi_{\restr{\binom{\tilde C_0}{A_{n+1}}}}\right)\right| \leq d_{n+1}}$, as required.
\end{proof}

As an immediate corollary, we obtain the following useful fact:

\begin{corollary}\label{k-ERP-many}
    Let $\Ccal$ be a class of finite structures and assume that $\Ccal$ has $k$-ERP. Then for all $i\leq n$, and $A_1,\dots A_n\in \C$ with $|A_i| = k$ and all $B\in\C$ such that $A_i\subseteq B$ for each $i\leq n$, there is some $C\in\C$ such that for all sets of colourings $\{\chi_i:\binom{C}{A_i}\to [2]:i\leq n\}$, there is some $\tilde B\in\binom{C}{B}$ such that ${\chi_i}_{\restr{\binom{\tilde B}{A_i}}}$ is constant, for all $i\leq n$.
\end{corollary}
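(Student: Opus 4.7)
The plan is simply to invoke \Cref{Ramsey-many-degs} with all Ramsey degrees set equal to $1$. Unpacking the hypothesis, since $\Ccal$ has $k$-ERP, every structure of size $k$ in $\Ccal$ is a Ramsey object; by the definition of a Ramsey object (or equivalently via \Cref{fact:Andys equality}, using that rigidity forces $|\Aut(A_i)|=1$), this means each $A_i$ has structural Ramsey degree exactly $1$.

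With this observation in hand, I would apply \Cref{Ramsey-many-degs} to the structures $A_1,\dots,A_n$ and $B$, taking $d_i = 1$ for every $i\leq n$. This produces some $C\in\Ccal$ with $B\subseteq C$ such that, for any choice of parameters $r_i\in\Nbb$ and any family of colourings $\chi_i:\binom{C}{A_i}\to[r_i]$, one can find $\tilde B\in\binom{C}{B}$ with $\bigl|\Im\bigl({\chi_i}_{\restr \binom{\tilde B}{A_i}}\bigr)\bigr|\leq 1$ for all $i\leq n$.

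Finally, I would specialise to the case $r_i = 2$, and observe that a colouring whose image on $\binom{\tilde B}{A_i}$ has size at most $1$ is, by definition, constant on $\binom{\tilde B}{A_i}$. This is exactly the conclusion of the corollary, so no further work is required.

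There is no real obstacle here: the content is entirely in \Cref{Ramsey-many-degs}, and the corollary is the specialisation ``all degrees equal to $1$, only two colours''. The only subtlety worth flagging in the write-up is the translation between ``Ramsey object'' (phrased in terms of \emph{embedding} Ramsey degree in \Cref{def:erp}) and the \emph{structural} Ramsey degree that appears in the statement via $\binom{C}{A_i}$; this translation is immediate from \Cref{fact:Andys equality} together with rigidity of Ramsey objects.
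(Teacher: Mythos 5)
Your proposal is correct and is exactly the paper's intended argument: the corollary is stated as an immediate consequence of \Cref{Ramsey-many-degs}, obtained by setting every $d_i=1$ (justified because $k$-ERP makes each $A_i$ a Ramsey object, hence rigid with structural Ramsey degree $1$) and specialising to two colours. Your remark about passing between embedding and structural Ramsey degrees via \Cref{fact:Andys equality} is a sensible clarification, since \Cref{Ramsey-many-degs} is phrased with colourings of $\binom{C}{A_i}$ while \Cref{def:erp} speaks of embedding Ramsey degree.
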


\begin{definition}[HP, JEP, AP]\label{def:hp-jep-ap}
    Let $\L$ be a first-order language and $\C$ a class of $\L$-structures. We say that $\C$ has the:
	\begin{enumerate}[(1)]
            \item \emph{Hereditary Property} (HP) if whenever $A\in\C$ and $B\subseteq A$ we have that $B\in\C$.
            \item \emph{Joint Embedding Property} (JEP) if whenever $A,B\in\C$ there is some $C\in\C$ such that both $A$ and $B$ are embeddable in $C$.
            \item \emph{Amalgamation Property} (AP) if whenever $A,B,C\in\C$ are such that $A$ embeds into $B$ via $e:A\into B$ and into $C$ via $f:A\into C$ there exist a $D\in\C$ and embeddings $g:B\into D$, $h:C\into D$ such that $g\circ e = h\circ f$.
	\end{enumerate}
We say that $\Ccal$ is a \emph{Ramsey class} if it has HP, JEP and ERP. 
\end{definition}

\begin{fact}[\cite{Nesetril_2005}]
    If $\Ccal$ is a Ramsey class, then $\Ccal$ has AP.
\end{fact}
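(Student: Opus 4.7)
The plan is to combine JEP (to place $B$ and $C$ inside a common structure of $\Ccal$) with ERP (to force the two resulting copies of $A$ to be identified), and then to extract the amalgam as a substructure via HP.

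Given embeddings $e : A \hookrightarrow B$ and $f : A \hookrightarrow C$, first invoke JEP to obtain some $D \in \Ccal$ together with embeddings $\iota_B : B \hookrightarrow D$ and $\iota_C : C \hookrightarrow D$. The induced embeddings $j_B := \iota_B \circ e$ and $j_C := \iota_C \circ f$ of $A$ into $D$ may well be distinct; if $j_B = j_C$, then $(D, \iota_B, \iota_C)$ is already an amalgam and we are done.

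Assume therefore that $j_B \neq j_C$, and apply ERP to $A$ with target $D$: there exists $E \in \Ccal$ such that every finite colouring $\chi$ of $\mathsf{Emb}(A, E)$ admits an embedding $\phi \in \mathsf{Emb}(D, E)$ with $\chi(\phi \circ j)$ independent of $j \in \mathsf{Emb}(A, D)$. One then chooses a colouring of $\mathsf{Emb}(A, E)$ that records for each $A$-embedding its ``role'' with respect to nearby copies of $D$; a monochromatic $D$-copy obtained from ERP will then witness two embedded copies of $B$ and $C$ sharing a common copy of $A$, and HP lets us cut this configuration down to the desired amalgam.

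The main obstacle is the design of the colouring. A naïve isomorphism-type colouring is trivially constant on the $A$-embeddings inside any single copy of $D$, and so forces no useful identification. The workaround I expect to use is to apply ERP simultaneously to several templates — invoking the multi-template Ramsey statement of \Cref{Ramsey-many-degs} — so as to colour \emph{pairs} of $A$-embeddings by the isomorphism type of their joint image. This should produce, inside $E$, two overlapping $D$-copies whose $B$- and $C$-parts glue along a common $A$-copy; HP then extracts the amalgam from this configuration.
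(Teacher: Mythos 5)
The paper does not prove this statement itself (it is quoted as a Fact from the cited reference), so I am measuring your proposal against the standard argument there. Your skeleton is the right one --- JEP to get $D\in\Ccal$ with $\iota_B:B\into D$, $\iota_C:C\into D$, then ERP for colourings of $\mathsf{Emb}(A,E)$ over a structure $E$ with $E\to(D)^A_2$ --- but the step you yourself flag as ``the main obstacle'' is precisely the one missing idea, and your proposed workaround does not close it. Colouring pairs of $A$-embeddings by the isomorphism type of their joint image suffers from exactly the defect you identified in the na\"ive single-copy version: isomorphism types are preserved by every embedding $\phi:D\into E$, so any such colouring is automatically constant on $\phi\circ\mathsf{Emb}(A,D)$ and the monochromatic copy of $D$ carries no new information. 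Invoking \Cref{Ramsey-many-degs} to run several such colourings simultaneously does not help, since each one is individually vacuous. Moreover, even if you could force the images of a $B$-copy and a $C$-copy to overlap in a common $A$-copy, that is weaker than AP, which requires the \emph{maps} to agree: $g\circ e=h\circ f$ as embeddings, not merely equality of images.

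The key colouring is an \emph{extension} colouring, which depends on the ambient structure $E$ and not on isomorphism types: define $\chi:\mathsf{Emb}(A,E)\to\{0,1\}$ by $\chi(\alpha)=1$ if and only if there exists $\beta\in\mathsf{Emb}(B,E)$ with $\beta\circ e=\alpha$. By ERP choose $\phi\in\mathsf{Emb}(D,E)$ with $\chi$ constant on $\phi\circ\mathsf{Emb}(A,D)$. Since $\alpha_1:=\phi\circ\iota_B\circ e$ visibly extends through $e$ (take $\beta=\phi\circ\iota_B$), the constant value is $1$; hence $\alpha_2:=\phi\circ\iota_C\circ f$ also satisfies $\chi(\alpha_2)=1$, yielding $\beta\in\mathsf{Emb}(B,E)$ with $\beta\circ e=\phi\circ\iota_C\circ f$. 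Then $E$ with $g:=\beta$ and $h:=\phi\circ\iota_C$ is the required amalgam (no appeal to HP is needed, though one may cut down to the substructure generated by the images). The crucial feature, which your isomorphism-type colourings cannot replicate, is that ``extends to a $B$-copy inside $E$'' is not invariant under passing to a sub-copy of $D$, so ERP genuinely forces an identification.
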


Recall that for a structure $\M$ we define the \emph{age} of $\M$, denoted by $\Age(\M)$, to be the class of (isomorphism types of) finitely generated substructures of $\M$. We say that $\M$ is \emph{ultrahomogeneous} if every isomorphism between finitely generated substructures of $\M$ extends to an automorphism of $\M$.

\begin{fact}[Fraïssé's Theorem]\label{fraisse}
    Let $\C$ be a countable class of finite structures with HP, JEP and AP. Then there exists a unique, up to isomorphism, countable ultrahomogeneous $\L$-structure $\M$ such that $\Age(\M) = \C$.
\end{fact}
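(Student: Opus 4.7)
The plan is to construct $\M$ as the union of a countable chain $M_0 \subseteq M_1 \subseteq \cdots$ of structures in $\C$, using JEP to ensure every structure in $\C$ embeds into $\M$, and using AP (with a bookkeeping device) to ensure that $\M$ satisfies an ``extension property'' from which ultrahomogeneity follows by back-and-forth. Uniqueness is then proved by a separate back-and-forth argument between two such structures.

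For the construction, enumerate $\C = \{A_n : n \in \Nbb\}$ (possible since $\C$ is countable and each $A_n$ is finite) and start with $M_0 := A_0$. At each stage I interleave two kinds of steps: (i) a \emph{JEP step}, applying JEP to $M_n$ and the next $A_k$ to obtain $M_{n+1} \in \C$ containing both; and (ii) an \emph{AP step}, picking a pending task --- an embedding $f : A \into B$ where $A$ is a finite substructure of some already-constructed $M_m$ and $B \in \C$ --- and applying AP to the inclusion $A \into M_n$ and to $f$, yielding $M_{n+1} \in \C$ that contains $M_n$ together with an embedded copy of $B$ extending $f$. A standard diagonal enumeration ensures that every $A_k$ is absorbed and every task generated at a finite stage is eventually dispatched. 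Set $\M := \bigcup_n M_n$. Then HP yields $\Age(\M) \subseteq \C$, the JEP steps yield $\Age(\M) \supseteq \C$, and the AP steps give the \emph{extension property}: for every finite $A \subseteq \M$ and every $B \in \C$ containing $A$, there is an embedding $B \into \M$ fixing $A$ pointwise. Ultrahomogeneity then follows by a standard back-and-forth: given an isomorphism $f : A \to A'$ between finite substructures of $\M$, one enumerates $\M$ and extends $f$ one element at a time (alternating sides) by invoking the extension property.

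For uniqueness, suppose $\M_1$ and $\M_2$ are two countable ultrahomogeneous structures, each with age $\C$. Enumerate both universes and perform a back-and-forth: a finite partial isomorphism $g$ from $\M_1$ to $\M_2$ is extended at each stage to cover the next element on one side, using the age condition to locate an isomorphic copy of the relevant finite substructure inside the other model and ultrahomogeneity of the target to realise the extension there. The union of these partial isomorphisms is an isomorphism $\M_1 \to \M_2$.

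The main technical obstacle is the bookkeeping in the construction: the collection of pending AP tasks grows as $\M$ is built, so one must arrange a single countable enumeration that simultaneously dispatches every such task and also absorbs every $A_n$ via JEP. This is handled by a routine diagonal argument via a pairing function, but it is the one place in the proof where genuine care is required.
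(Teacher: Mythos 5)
The paper states this as a classical fact (Fraïssé's Theorem) without giving a proof, so there is nothing internal to compare against; your argument is the standard one and is correct. The chain construction with interleaved JEP and AP steps under a diagonal bookkeeping scheme, the derivation of ultrahomogeneity from the extension property, and the back-and-forth uniqueness argument are all exactly as in the canonical proof.
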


In the notation of \Cref{fraisse}, we will write $\Flim(\C)$ to denote the unique, up to isomorphism, countable ultrahomogeneous $\M$ such that $\Age(\M) = \C$.

\section{\emph{Kay}-graphs}\label{sec:kay-graphs}
Recall that given an ordered graph $G=(V;R,\leq)$ we define the \emph{ordered two-graph} of $G$ to be the ordered $3$-hypergraph $K(G) = \left(V;R^{(3)},\leq \right)$, on the same ordered vertex set $(V;\leq)$, with hyperedge relation $R^{(3)}$ defined as follows:
\[
	R^{(3)}(x_1,x_2,x_3)
	\iff |\left\{\left\{i,j\right\}\in\binom{[3]}{2}:\{x_i,x_j\}\in R\right\}|\equiv 1\pmod{2},
\]

Various properties of two-graphs have been investigated deeply, in the last 50 years. See, for instance, \cite{SEIDEL_1991}, for an excellent survey. In this paper, we are interested in a higher-arity generalisation of two-graphs, which appears implicitly in \cite{Thomas_1996}.

We generalise this construction to higher-arity structures in the natural way, as follows:
\begin{definition}
	Given a $k$-hypergraph $H=(V;R)$ we define the \emph{\emph{Kay}-graph} of $H$ to be the $(k+1)$-hypergraph $K(H) = \left(V;R^{(k+1)}\right)$ on the same vertex set $V$ as $H$ with hyperedge relation $R^{(k+1)}$ defined as follows:
\[
\Scale[0.9]{
\begin{aligned}
		R^{(k+1)}&(x_1,\dots,x_{k+1}) \\
		&\iff |\left\{\left\{i_1,\dots,i_k\right\}\in\binom{[k+1]}{k}:\{x_{i_1},\dots,x_{i_k}\}\in R\right\}|\equiv k+1\pmod{2} 
\end{aligned}}
\]
We similarly define the \emph{ordered \emph{Kay}-graph} of an ordered hypergraph. If $\Hcal$ is a class of (ordered) $k$-hypergraphs we will write $K(\Hcal)$ for the class of all (ordered) \emph{Kay}-graphs of hypergraphs from $\Hcal$, that is $K(\Hcal):=\{K(H):H\in\Hcal\}$.
\end{definition}

In the remainder of this section, we will only discuss \emph{Kay}-graphs, but all the arguments naturally go through if one considers ordered \emph{Kay}-graphs, instead.

Let $P^{(k)}$ be the class of all finite $(k+1)$-hypergraphs $(V;R)$ satisfying the following condition:
\begin{itemize}
	\item[($\dagger$)] For all $V_0\in\binom{V}{k+2}$ the induced $(k+1)$-subhypergraph $(V_0,R_{\restr{V_0}})$ has $|R_{\restr{V_0}}|\equiv k\pmod{2}$.
\end{itemize}

\begin{proposition}\label{prop:kay-graphs-parity}
	Let $k\in\Nbb_{\geq 2}$. Then $P^{(k)} = K(\Hcal_{k})$.
\end{proposition}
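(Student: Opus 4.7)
The plan is to prove the two inclusions separately; both ultimately rest on a parity calculation, but the reverse direction requires an explicit construction.

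For the easy direction $K(\Hcal_k) \subseteq P^{(k)}$: Given $H = (V; R) \in \Hcal_k$ and a $(k+2)$-subset $V_0 \subseteq V$, for each $(k+1)$-subset $S$ of $V_0$ I would write $f(S) := |\{T \in \binom{S}{k} : T \in R\}|$, so that by definition $S$ is a hyperedge of $K(H)$ iff $f(S) \equiv k+1 \pmod{2}$. Double counting pairs $(T, S)$ with $T \in R \cap \binom{V_0}{k}$ and $T \subseteq S \in \binom{V_0}{k+1}$ immediately gives $\sum_{S} f(S) = 2 \cdot |R \cap \binom{V_0}{k}| \equiv 0 \pmod{2}$, since each such $T$ is contained in exactly $|V_0 \setminus T| = 2$ sets $S$. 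Consequently the number of $(k+1)$-subsets of $V_0$ at which $f$ is odd is even. Since $\binom{V_0}{k+1}$ has exactly $k+2$ elements, a one-line case split on the parity of $k$ then yields $|\{S \in \binom{V_0}{k+1} : f(S) \equiv k+1 \pmod{2}\}| \equiv k \pmod{2}$, which is precisely ($\dagger$).

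For the reverse inclusion $P^{(k)} \subseteq K(\Hcal_k)$, I would give an explicit preimage under $K$, generalising the classical ``fix a vertex'' switching construction for two-graphs. Given $(V; R') \in P^{(k)}$, the case $V = \emptyset$ being trivial, fix any $v_0 \in V$ and define $H = (V; R) \in \Hcal_k$ by declaring that no $k$-subset of $V$ containing $v_0$ lies in $R$, and that for $T \in \binom{V \setminus \{v_0\}}{k}$ one has $T \in R$ iff $T \cup \{v_0\} \in R'$ when $k$ is even, and iff $T \cup \{v_0\} \notin R'$ when $k$ is odd.

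It then remains to verify $K(H) = (V; R')$, and I would do this on each $(k+1)$-subset $S \subseteq V$ in two cases. When $v_0 \in S$, the set $S \setminus \{v_0\}$ is the unique $k$-subset of $S$ that can belong to $R$, so the definition of $R$ combined with the parity of $k+1$ immediately gives $S \in K(H) \iff S \in R'$. When $v_0 \notin S$, I apply ($\dagger$) to the $(k+2)$-subset $V_0 := S \cup \{v_0\}$: its $(k+1)$-subsets are $S$ itself together with the sets $(S \setminus \{s\}) \cup \{v_0\}$ as $s$ ranges over $S$, so ($\dagger$) pins down the parity of $|\{T \in \binom{S}{k} : T \cup \{v_0\} \in R'\}|$ in terms of whether $S \in R'$; feeding this into the definition of $K(H)$ together with the parity-dependent definition of $R$ yields the desired equivalence.

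I expect the main obstacle to be the parity bookkeeping in the reverse direction: the construction of $R$ must flip based on the parity of $k$, and the two cases ($v_0 \in S$ and $v_0 \notin S$) must be verified with matching parity conventions so that everything lines up. Once this is properly set up, both directions reduce to elementary double counting.
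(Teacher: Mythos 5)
Your proof is correct. For the inclusion $P^{(k)}\subseteq K(\Hcal_k)$ you follow essentially the same route as the paper: fix a distinguished vertex, read off a $k$-hypergraph from the $(k+1)$-edges through it, and verify the two cases according to whether the distinguished vertex lies in the $(k+1)$-set under consideration, the second case being exactly an application of $(\dagger)$ to the $(k+2)$-set obtained by adjoining it. The only real difference is where the parity of $k$ is absorbed: the paper makes every $k$-set through the distinguished vertex an edge and keeps the rule $T\in R\iff T\cup\{\star\}\in S$ uniform in $k$, whereas you make none of them edges and compensate by negating that rule when $k$ is odd; both conventions close up, and yours has the small virtue of making the parity-dependence explicit in the definition rather than hidden in the case analysis. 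For the inclusion $K(\Hcal_k)\subseteq P^{(k)}$ your argument is genuinely different and noticeably cleaner: the paper proceeds by induction on the number of $k$-hyperedges of $H$, checking the base case separately for the two parities of $k$ and tracking how adding a single hyperedge perturbs the edge count of the Kay-graph on each $(k+2)$-set, while you obtain $(\dagger)$ in one stroke from the double count $\sum_{S\in\binom{V_0}{k+1}}f(S)=2\,\bigl|R\cap\tbinom{V_0}{k}\bigr|$, which shows that the number of $(k+1)$-subsets of $V_0$ with $f$ odd is even, and then reduces $(\dagger)$ to the observation that $\bigl|\binom{V_0}{k+1}\bigr|=k+2\equiv k\pmod 2$. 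This avoids the induction entirely and makes transparent why the residue appearing in $(\dagger)$ is $k$ and not something else.
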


\begin{proof}
	We first show the inclusion $P^{(k)}\subseteq K(\Hcal_{k})$. Let $(V;S)$ be a $(k+1)$-hypergraph and assume that $(V;S)\in P^{(k)}$. We need to show that there is a $k$-hypergraph $H=(V;R)$ such that $(V;S) = K(H)$. Fix a point $\star\in V$ and define a $k$-ary relation $R\subseteq \binom{V}{k}$ as follows:
	\[
		R(x_1,\dots,x_k)\iff 
		\begin{cases}
			S(x_1,\dots,x_k,\star), & \text{ if }\star\not\in\{x_1,\dots,x_k\} \\
			S(x_1,\dots,x_{k-1},\star) &\text{ otherwise}, 
		\end{cases}
	\]
    for all $\{x_1,\dots,x_k\}\in\binom{V}{k}$. Observe that by definition we have that:
	\begin{equation}\label{eq:parity-k-graph}
        \Scale[0.9]{
	   \begin{aligned}
	       &R^{(k+1)}(x_1,\dots,x_{k+1}) \\
	       &\iff |\left\{\left\{i_1,\dots,i_k\right\}\in\binom{[k+1]}{k}:\{x_{i_1},\dots,x_{i_k}\}\in R\right\}|\equiv k+1\pmod{2} \\
	       &\iff |\left\{\left\{i_1,\dots,i_{k}\right\}\in\binom{[k+1]}{k}:\{x_{i_1},\dots,x_{i_{k}},\star\}\in S\right\}|\equiv k+1\pmod{2}\\
	       &\iff |S_{\restr\{x_1,\dots,x_{k+1},\star\}})\setminus\{x_1,\dots,x_{k+1}\}| \equiv k+1\pmod{2},
	\end{aligned}
        }
	\end{equation}
    for all $\{x_1,\dots,x_{k}\}\in\binom{V\setminus\{\star\}}{k}$.
    \begin{claim}
        The induced \emph{Kay}-graph $\left(V;R^{(k+1}\right)$ of $(V;R)$ is precisely $(V;S)$.
    \end{claim}
    
    \begin{claimproof}
        We need to show that for all $\{x_1,\dots,x_k\}\in \binom{V}{k+1}$ we have that:
	\[
	       (x_1,\dots,x_{k+1})\in R^{(k+1)} \text{ if, and only if } (x_1,\dots,x_{k+1})\in S. 
	\]
	We consider two cases.
	
	\emph{{Case 1}.} $\{x_1,\dots,x_{k+1}\}\in \binom{V\setminus\{\star\}}{k+1}$. 
	
        First, assume that $(x_1,\dots,x_{k+1})\in R^{(k+1)}$. Since $(V;S)\in P^{(k)}$, by ($\dagger$), the induced subhypergraph $\left(\{x_1,\dots,x_{k+1},\star\};S_{\restr\{x_1,\dots,x_{k+1},\star\}}\right)$ of $(V;S)$ must have:
	\[
		|S_{\restr\{x_1,\dots,x_{k+1},\star\}}|\equiv k\pmod{2}
	\]
	Since $(x_1,\dots,x_{k+1})\in R^{(k+1)}$, we know, by \Cref{eq:parity-k-graph} that:
	\[
	       |S_{\restr\{x_1,\dots,x_{k+1},\star\}})\setminus\{x_1,\dots,x_{k+1}\}| \equiv k+1\pmod{2},
	\]
	so we must have that $\{x_1,\dots,x_{k+1}\}\in S$, as claimed. 
	
        Conversely, suppose that $(x_1,\dots,x_{k+1})\in S$. Then, by ($\dagger$) the number of hyperedges in the induced subhyphergraph $\left(\{x_1,\dots,x_{k+1},\star\};S_{\restr\{x_1,\dots,x_{k+1},\star\}}\right)$ of $(V;S)$ must be congruent to $k$, modulo $2$. In particular, if
	\[
	       |\left\{\left\{i_1,\dots,i_{k}\right\}\in\binom{[k+1]}{k}:\{x_{i_1},\dots,x_{i_{k}},\star\}\in S\right\}|\equiv k\pmod{2},
	\]
        then we cannot have that $(x_1,\dots,x_{k+1})\in S$, contrary to our original assumption. Hence
	\[
	       |\left\{\left\{i_1,\dots,i_{k}\right\}\in\binom{[k+1]}{k}:\{x_{i_1},\dots,x_{i_{k}},\star\}\in S\right\}|\equiv k+1\pmod{2},
	\]
        and thus, by \Cref{eq:parity-k-graph} we have that $(x_1,\dots,x_{k+1})\in R^{(k+1)}$, as required.
	
        \emph{Case 2.} $\{x_1,\dots,x_{k+1}\}\in \binom{V}{k+1}$, and, without loss of generality, $x_{k+1}=\star$. 
        
        Suppose first that $(x_1,\dots,x_{k},\star)\in R^{(k+1)}$. By definition of $R$ we have that $(x_{i_1},\dots,x_{i_{k-1}},\star)\in R$, for all $\{i_1,\dots,i_{k-1}\}\in \binom{[k]}{k-1}$, and thus
	\[
        \Scale[0.9]{
	\begin{aligned}
            |\bigg\{\{i_1,\dots,i_k\}\in\binom{[k+1]}{k}:\{x_{i_1},&\dots,x_{i_k}\}\in R\bigg\}| \\
            &= k+|\left\{\left\{i_1,\dots,i_k\right\}\in\binom{[k]}{k}:\{x_{i_1},\dots,x_{i_k}\}\in R\right\}| \\
            &= \begin{cases}
			k+1 &\text{ if } \{x_1,\dots,x_k\}\in R\\
			k &\text{ otherwise}.
		\end{cases}
	\end{aligned}
        }
	\]
        But, since $(x_1,\dots,x_{k},\star)\in R^{(k+1)}$, the cardinality of this set is congruent to $k+1$ modulo $2$, so we must have that  $(x_1,\dots,x_k)\in R$, and thus $(x_1,\dots,x_k,\star)\in S$, as required.
	
        Conversely, suppose that $(x_1,\dots,x_k,\star)\in S$. If $(x_1,\dots,x_{k},\star)\not\in R^{(k+1)}$, then, arguing as before we have that $(x_1,\dots,x_k)\not\in R$ and thus $(x_1,\dots,x_{k},\star)\not\in S$, contradicting our assumption. Hence we must have that $(x_1,\dots,x_{k},\star)\in R^{(k+1)}$, concluding the proof of the claim.
    \end{claimproof}
	
    We now move on to the inclusion $K(\Hcal_{k})\subseteq P^{(k)}$. We will prove by induction on the cardinality of the hyperedge set of a finite $k$-graph $H=\left(V;R\right)\in \Hcal_{k}$ that $K(H)\in P^{(k)}$.

    Let $H=\left(V;R\right)\in \Hcal_{k}$. First, note that if $R=\emptyset$ then we can easily see that  $K(H)\in P^{(k)}$. Indeed, if $k\equiv 0\pmod{2}$ then $R^{(k+1)} = \emptyset$ and hence $\left(V;R^{(k+1)}\right)\in P^{(k)}$, because $(\dagger)$ holds trivially for all $(k+2)$-element subsets of $V$. Similarly, if $k\equiv 1\pmod{2}$, then $R^{(k+1)} = \binom{V}{k+1}$, and hence $\left(V;R^{(k+1)}\right)\in P^{(k)}$, because for every $(k+2)$-element subset of $V$, the number of $(k+1)$-hyperedges is $\binom{k+2}{k+1}\equiv k+2\equiv k\pmod{2}$, and thus ($\dagger$) holds. 

    Now, assume that for any hyperedge $\{x_1,\dots,x_{k}\}\in R$ we have that $K((V;R'))\in P^{(k)}$, where $R' = R\setminus\{(x_1,\dots, x_{k})\}$. We claim that then $K((V;R))\in P^{(k)}$. 

    Indeed, let $\{y_1,\dots,y_{k+2}\}\subseteq V$. We need to show that in the induced subhypergraph $\left(\{y_1,\dots,y_{k+2}\};R^{(k+1)}_{\restr\{y_1,\dots,y_{k+2}\}}\right)$ we have that $\big| R^{(k+1)}_{\restr\{y_1,\dots,y_{k+2}\}} \big|\equiv{k}\pmod{2}$. By inductive hypothesis, $K((V;R'))\in P^{(k)}$, so this is the case for $\left(\{y_1,\dots,y_{k+2}\};{R'}^{(k+1)}_{\restr\{y_1,\dots,y_{k+2}\}}\right)$. Of course, if $\{x_1,\dots,x_k\}\not\subseteq\{y_1,\dots,y_{k+2}\}$ then $\left(\{y_1,\dots,y_{k+2}\};R^{(k+1)}_{\restr\{y_1,\dots,y_{k+2}\}}\right) = \left(\{y_1,\dots,y_{k+2}\};{R'}^{(k+1)}_{\restr\{y_1,\dots,y_{k+2}\}}\right)$, and we are done. Thus we may assume that $\{x_1,\dots,x_{k}\}\subseteq\{y_1,\dots,y_{k+2}\}$. To simplify notation, assume that $\{y_1,\dots,y_{k+2}\} = \{x_1,\dots,x_k,y_1,y_2\}$. 

    We consider the $(k+1)$-hyperedges in $\left(\{x_1,\dots,x_{k},y_1,y_2\};{R'}^{(k+1)}_{\restr\{x_1,\dots,x_k,y_1,y_2\}}\right)$. These are associated with $(k+1)$-element subsets of $\{x_1,\dots,x_{k},y_1,y_2\}$, in the following way:
	\begin{itemize}
		\item Let $n_1$ be the number of $k$-hyperedges on the set $\{x_1,\dots,x_{k},y_1\}$, without the hyperedge $\{x_1,\dots,x_k\}$. 
		\item Let $n_2$ be the number of $k$-hyperedges on the set $\{x_1,\dots,x_{k},y_2\}$, without the hyperedge $\{x_1,\dots,x_k\}$. 
		\item Let $n_3^I$ be the number of $k$-hyperedges on the subsets $\{x_{j}:j\in I\}\cup\{y_1,y_2\}$, for $I\in\binom{[k]}{k-1}$.
	\end{itemize}
    Then, since, by definition, to check if a $(k+1)$-subset forms an edge, we simply need to check it's parity mod 2, we have that the total number of $(k+1)$-hyperedges in $\left(\{x_1,\dots,x_{k},y_1,y_2\};{R'}^{(k+1)}_{\restr\{x_1,\dots,x_k,y_1,y_2\}}\right)$ is precisely:
    \[
        n_1\pmod{2}+n_2\pmod{2}+\sum_{I\in\binom{[k-1]}{k}}\left(n_3^I\pmod{2}\right).
    \]	
    In particular, by our inductive hypothesis, we have that:
    \[
	n_1 + n_2 + \sum_{I\in\binom{[k-1]}{k}}n_3^I\equiv k \pmod{2}, 
    \]	
    Now, after adding the hyperedge $\{x_1,\dots,x_{k}\}$, the number of hyperedges will be:
    \[
        (n_1+1) + (n_2+1) + \sum_{I\in\binom{[k-1]}{k}} n_3^I\equiv n_1 + n_2 + \sum_{I\in\binom{[k-1]}{k}} n_3^I\equiv k \pmod{2},
    \]
    and thus the result follows.
\end{proof}

\begin{proposition}\label{prop:k-graphs-fraisse}
    For all $k\in\Nbb_{\geq 2}$ the class $K(\Hcal_k)$ is a Fra\"iss\'e class.
\end{proposition}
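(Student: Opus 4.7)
The plan is to verify that $K(\Hcal_k)$ satisfies the Hereditary, Joint Embedding, and Amalgamation Properties required by Fra\"iss\'e's Theorem; countability up to isomorphism is automatic, since we are dealing with finite hypergraphs in a single relation symbol. The key move throughout is to work through the identification $K(\Hcal_k) = P^{(k)}$ afforded by \Cref{prop:kay-graphs-parity}: HP is then immediate, since the parity condition $(\dagger)$ is a universally quantified statement about $(k+2)$-subsets of the vertex set and therefore passes to induced substructures.

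For JEP, given $B_1, B_2 \in K(\Hcal_k)$ on disjoint vertex sets $V_1, V_2$, I would use \Cref{prop:kay-graphs-parity} to fix lifts $H_1, H_2 \in \Hcal_k$ with $K(H_i) = B_i$, and then form the disjoint union $H := H_1 \sqcup H_2$ in $\Hcal_k$. A quick observation — that the Kay-graph construction commutes with restriction to a vertex subset, because the parity count defining an edge on $\{x_1,\dots,x_{k+1}\}$ only sees $k$-subsets of the same vertices — gives $K(H) \restr V_i = K(H \restr V_i) = K(H_i) = B_i$, as required.

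For AP, let $A \subseteq B$ and $A \subseteq C$ in $K(\Hcal_k)$, and assume (after renaming) that $V_B \cap V_C = V_A$. If $V_A = \emptyset$, the statement reduces to JEP, so I may assume $V_A \neq \emptyset$ and fix a basepoint $\star \in V_A$. The plan is to apply the explicit lift from the proof of \Cref{prop:kay-graphs-parity} \emph{using the same $\star$} to both $B$ and $C$, producing $k$-hypergraphs $H_B$ on $V_B$ and $H_C$ on $V_C$ with $K(H_B) = B$ and $K(H_C) = C$. Because the lifting formula only references $\star$ and the value of the hyperedge relation on $(k+1)$-subsets through $\star$, and since $B$ and $C$ agree on $V_A$, the two lifts automatically satisfy $H_B \restr V_A = H_C \restr V_A$. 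Then free amalgamation in $\Hcal_k$ is well-defined: set $H_D := H_B \cup H_C$ on $V_D := V_B \cup V_C$, and observe that $H_D \restr V_B = H_B$, since any $k$-subset of $V_B$ lying in $H_C$ must be contained in $V_A$, where $H_B$ and $H_C$ already coincide. Finally, put $D := K(H_D) \in P^{(k)} = K(\Hcal_k)$; by commutation of $K$ with restriction, $D \restr V_B = K(H_B) = B$ and symmetrically $D \restr V_C = C$.

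The only non-routine step is this coherent lifting in AP. Different lifts of a Kay-graph to a $k$-hypergraph will generally disagree on a common substructure, so naïvely invoking \Cref{prop:kay-graphs-parity} twice and then amalgamating in $\Hcal_k$ cannot work. Pinning down the basepoint $\star$ in the amalgamation base $V_A$ is what makes the two lifts compatible and lets us transport the trivial free amalgamation of $\Hcal_k$ back to $K(\Hcal_k)$.
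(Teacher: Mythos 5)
Your proof is correct and follows essentially the same route as the paper's: both arguments reduce to $P^{(k)}$ via \Cref{prop:kay-graphs-parity}, lift the structures to $\Hcal_k$ using the explicit basepoint construction with a common $\star$ chosen in the amalgamation base, free-amalgamate in $\Hcal_k$, and transport back using the fact that the \emph{Kay}-graph construction commutes with induced substructures. Your explicit verification that the two lifts agree on $V_A$ (and the separate handling of $V_A=\emptyset$) spells out a step the paper leaves implicit, but the underlying idea is identical.
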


\begin{proof}
    We start with the following easy claim:
    \begin{claim}
        Let $H_i = (V_i,R_i)$, for $i\in[2]$ be $k$-hypergraphs such that $H_1$ is an induced $k$-subhypergraph of $H_2$ then $K(H_1)$ is an induced $(k+1)$-subhypergraph of $K(H_2)$.
    \end{claim}

    \begin{claimproof}
        We need to show that for all $x_1,\dots,x_{k+1}\in V_1$ we have that $K(H_1)\vDash R^{(k+1)}(x_1,\dots,x_{k+1})$ if, and only if, $K(H_2)\vDash R^{(k+1)}(x_1,\dots,x_{k+1})$. Since $H_1$ is an induced subhypergraph of $H_2$, by definition we have that $H_1\vDash R(x_{i_1},\dots,x_{i_k})$ if, and only if $H_2\vDash R(x_{i_1},\dots,x_{i_k})$. Hence:
        \[
        \Scale[0.9]{
        \begin{aligned}
        \Big\{\left\{i_1,\dots,i_k\right\}\in\binom{[k+1]}{k}:&\{x_{i_1},\dots,x_{i_k}\}\in R(H_1)\Big\} \\
        &= 	\left\{\left\{i_1,\dots,i_k\right\}\in\binom{[k+1]}{k}:\{x_{i_1},\dots,x_{i_k}\}\in R(H_2)\right\},
	\end{aligned}
        }
        \]
        and the result follows.
    \end{claimproof}

    By \Cref{prop:kay-graphs-parity}, it suffices to show that for all $k\geq 2$, the class $P^{(k)}$ is a Fra\"iss\'e class. We only show that $P^{(k)}$ has AP, since the argument for JEP is be similar. Let $K_i=(V_i,S_i)\in P^{(k)}$ be finite \emph{Kay}-graphs and suppose that we have embeddings $f_1:K_0\into K_1$ and $f_2:K_0\into K_2$. Fix a point $\star\in V(H_0)$ and let $H_i = (V_i;R_i)$, where $R_i$ is defined as in \Cref{prop:kay-graphs-parity}. By the proof of that proposition, we have that $K(H_i) = K_i$. Now, since the class of all $k$-hypergraphs has the free amalgamation property, let $H\in\Hcal_k$ be the free amalgam of $H_1$ and $H_2$ over $H_0$. By the previous claim, we have that $K_i = K(H_i)$ embed into $K(H)$, and these embeddings preserve the inclusions $K_0\subseteq K_i$, for $i\in[2]$. Since $K(H)$ is a \emph{Kay}-graph, we have that $K(H)\in P^{(k)}$, and the result follows.  
\end{proof}

\section{Approximate Ramsey properties for \emph{Kay}-graphs}\label{sec:approximate}

We recall some of the machinery from \cite{Zucker2015}. To keep notation consistent, let $\L\subseteq\L^\star$ be relational languages and $\Ccal$, $\Ccal^\star$ classes of finite structures in $\L$ and $\L^\star$, respectively, such that $\Ccal^\star$ is an expansion of $\Ccal$ (i.e. for all structures in $A^\star\in \Ccal^\star$ there is some structure $A\in\Ccal$ such that $A = A^\star_{\restr\L}$). We will focus on pairs of classes of structures of the form $(\Ccal^\star,\Ccal)$. 

\begin{definition}
    We say that $\Ccal^\star$ is a \emph{reasonable expansion} of $\Ccal$ if for any $A, B\in\Ccal$, embedding $f : A \into B$, and expansion $A^\star\in\Ccal^\star$ of $A$, then there is an expansion $B^\star\in\Ccal^\star$ of $B$ such that $f: A^\star\into B^\star$ is an embedding.
\end{definition}

\begin{fact}{\cite[Proposition~5.2]{KechrisPestovTodorcevic_2005}}\label{fact:fraisse-reasonable}
    If $\Ccal$ and $\Ccal^\star$ are Fra\"iss\'e classes then $\Ccal^\star$ is a reasonable expansion of $\Ccal$ if, and only if $\Flim(\Ccal^\star)_{\restr{\L}} = \Flim(\Ccal)$.
\end{fact}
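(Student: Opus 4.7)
My plan is to prove both directions by invoking the characterisation of $\Flim(\Ccal)$ as the unique countable $\L$-structure $\Mcal$ with $\Age(\Mcal) = \Ccal$ satisfying the Fra\"iss\'e extension property: for every $A \subseteq B$ in $\Ccal$ and every $\L$-embedding $e : A \into \Mcal$, there is an extension $\tilde e : B \into \Mcal$ of $e$.

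For the forward direction, I assume $\Ccal^\star$ is a reasonable expansion of $\Ccal$, set $\Mcal^\star := \Flim(\Ccal^\star)$, and aim to show that $\Mcal := \Mcal^\star_{\restr{\L}}$ satisfies both characterising properties of $\Flim(\Ccal)$. The inclusion $\Age(\Mcal) \subseteq \Ccal$ is immediate because $\Ccal^\star$ is an expansion of $\Ccal$. For the reverse inclusion, I first need to show that every $A \in \Ccal$ admits at least one expansion in $\Ccal^\star$. I would do this by picking any $B^\star \in \Ccal^\star$, jointly embedding $A$ and $B^\star_{\restr{\L}}$ into some $C \in \Ccal$ using JEP of $\Ccal$, lifting $C$ to an expansion $C^\star \in \Ccal^\star$ via reasonableness applied to the inclusion $B^\star_{\restr{\L}} \into C$, and then using HP of $\Ccal^\star$ to restrict $C^\star$ to an $\L^\star$-expansion of $A$. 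Once this is in hand, every $A \in \Ccal$ embeds into $\Mcal^\star_{\restr{\L}} = \Mcal$ as required. For the extension property of $\Mcal$, given $A \subseteq B$ in $\Ccal$ and an $\L$-embedding $e : A \into \Mcal$, I would equip $e(A)$ with its induced $\L^\star$-structure $A^\star \in \Ccal^\star$ (from $\Mcal^\star$), apply reasonableness to produce an expansion $B^\star \in \Ccal^\star$ of $B$ into which $A^\star$ embeds compatibly, and then feed $A^\star \into \Mcal^\star$ and $A^\star \into B^\star$ into the extension property of the Fra\"iss\'e limit $\Mcal^\star$. The $\L$-reduct of the resulting $\L^\star$-embedding $B^\star \into \Mcal^\star$ is the desired $\tilde e$.

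For the reverse direction, I assume $\Flim(\Ccal^\star)_{\restr{\L}} = \Flim(\Ccal)$ and fix data $A, B \in \Ccal$, $f : A \into B$, and $A^\star \in \Ccal^\star$ expanding $A$. I would embed $A^\star \into \Mcal^\star := \Flim(\Ccal^\star)$; this gives an $\L$-embedding of $A$ into $\Mcal^\star_{\restr{\L}} = \Flim(\Ccal)$, which, identifying $A$ with its image in $B$ via $f$, extends by the Fra\"iss\'e extension property to an $\L$-embedding $\tilde f : B \into \Flim(\Ccal) = \Mcal^\star_{\restr{\L}}$ agreeing with the previous embedding on $A$. The $\L^\star$-structure that $\tilde f(B)$ inherits from $\Mcal^\star$ then yields the required $B^\star \in \Ccal^\star$, with $f : A^\star \into B^\star$ an $\L^\star$-embedding by construction.

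The main subtlety, in my view, is the production of an initial expansion $A^\star \in \Ccal^\star$ for an arbitrary $A \in \Ccal$ in the forward direction: this is not built into the definition of reasonable expansion and must be extracted by the JEP-plus-reasonableness-plus-HP argument sketched above. Everything else is a fairly mechanical transfer between the $\L$- and $\L^\star$-worlds using the Fra\"iss\'e extension property of $\Mcal^\star$ and the fact that taking reducts preserves embeddings.
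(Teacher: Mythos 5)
Your argument is correct. Note that the paper itself gives no proof of this statement --- it is quoted as a Fact with a citation to \cite{KechrisPestovTodorcevic_2005} --- but your proof is essentially the standard one from the literature: both directions reduce to the extension-property characterisation of $\Flim$, with reducts and induced $\L^\star$-structures shuttling data between the two languages. You also correctly isolate the one genuinely non-mechanical point, namely that the definition of a reasonable expansion does not by itself guarantee that every $A\in\Ccal$ admits \emph{some} expansion in $\Ccal^\star$, and your JEP-plus-reasonableness-plus-HP argument fills that gap (modulo the harmless assumption that $\Ccal^\star$ is nonempty, which is implicit in its being a Fra\"iss\'e class).
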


\begin{definition}
    We say that $\Ccal^\star$ is a \emph{precompact expansion} of $\Ccal$ if for all $A\in\Ccal$ the set $\{A^\star\in\Ccal^\star:A = A^\star_{\restr\L}\}$ is finite.
\end{definition}

\begin{definition}
    We say that $\Ccal^\star$ has the \emph{expansion property} for $\Ccal$ if for any $A^\star\in\Ccal^\star$ there is some $B \in\Ccal$ such that for any expansion $B^\star\in\Ccal^\star$ of $B$, there is an embedding $f: A^\star\into B^\star$.
\end{definition}

We call $(\Ccal^\star,\Ccal)$ an \emph{excellent pair} if $\Ccal^\star$ is a reasonable precompact expansion of $\Ccal$ with the expansion property (for $\Ccal$) and $\Ccal^\star$ has ERP.

The result from \cite{Zucker2015} we will need to make use of is the following:

\begin{fact}{\cite[Proposition~5.8]{Zucker2015}}\label{fact:rd-in-excellent-pairs}
    Let $(\Ccal^\star,\Ccal)$ be an excellent pair. Then every $A\in\Ccal$ has finite Ramsey degree. In particular, the Ramsey degree of $A$ is equal to the number of expansions of $A$ in $\Ccal^\star$.
\end{fact}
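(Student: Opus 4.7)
The plan is to prove two inequalities: the Ramsey degree of $A$ is at most $t$ and at least $t$, where $t$ is the number of expansions of $A$ in $\Ccal^\star$ (finite by precompactness). Enumerate these as $A^\star_1,\dots,A^\star_t$. The central observation underpinning both directions is that, once an expansion $C^\star\in\Ccal^\star$ of a structure $C\in\Ccal$ is fixed, every embedding $f:A\into C$ carries a canonical \emph{induced expansion}---the pullback along $f$ of the $\L^\star$-structure that $C^\star$ puts on $f(A)$---which must be isomorphic to exactly one of the $A^\star_i$. This partitions $\Emb(A,C)$ into $t$ classes indexed by the induced expansion.

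For the upper bound, given $B\in\Ccal$ and $n\in\Nbb$, fix any expansion $B^\star\in\Ccal^\star$ of $B$ (possible by reasonableness). An embedding-version of \Cref{Ramsey-many-degs} applied in $\Ccal^\star$ (which holds since $\Ccal^\star$ has ERP, so each $A^\star_i$ has embedding Ramsey degree $1$) yields a $C^\star\in\Ccal^\star$ with the property that, for any family $\chi_i:\Emb(A^\star_i,C^\star)\to[n]$, there is a single $\tilde B^\star\in\Emb(B^\star,C^\star)$ on which every $\chi_i$ is constant with value $c_i$. Set $C:=C^\star\restr_\L$. Given $\chi:\Emb(A,C)\to[n]$, define $\chi_i(f):=\chi(f)$ via the forgetful map, and apply the above to obtain $\tilde B^\star$. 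Its $\L$-reduct $\tilde B$ satisfies $\chi(\Emb(A,\tilde B))\subseteq\{c_1,\dots,c_t\}$, since every $f\in\Emb(A,\tilde B)$ sits in exactly one $\Emb(A^\star_i,\tilde B^\star)$ (determined by its induced expansion), forcing $\chi(f)=c_i$.

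For the lower bound, invoke the expansion property: for each $i$, pick $B_i\in\Ccal$ such that every $\Ccal^\star$-expansion of $B_i$ embeds $A^\star_i$; then use JEP (which $\Ccal$ inherits from $\Ccal^\star$ via reasonableness) to combine the $B_i$ into a single $B\in\Ccal$ containing every $B_i$. Every expansion of $B$ then embeds each $A^\star_i$ simultaneously. Given any $C\in\Ccal$, fix an expansion $C^\star$ and colour $\chi:\Emb(A,C)\to[t]$ by $\chi(f):=i$ whenever the expansion induced by $f$ is isomorphic to $A^\star_i$. For any $g\in\Emb(B,C)$, the induced expansion of $g(B)$ is some expansion of $B$, hence embeds each $A^\star_i$; composing such embeddings with $g$ exhibits all $t$ colours on $\Emb(A,g(B))$, so the Ramsey degree cannot drop below $t$.

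The main technical obstacle is the multi-template iterated ERP invoked in the upper bound, but this is a direct generalisation of \Cref{Ramsey-many-degs} from structural to embedding Ramsey: the same induction on the number of templates goes through, using ERP (embedding Ramsey degree $1$) at each step in place of the weaker ``degree at most $d_i$'' hypothesis. Everything else reduces to bookkeeping about the canonical induced expansion and the partition it produces on $\Emb(A,C)$.
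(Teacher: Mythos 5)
The paper does not actually prove this statement --- it is quoted as a black-box Fact from \cite[Proposition~5.8]{Zucker2015} --- so there is no internal proof to compare against; what you have written is, in substance, a correct reconstruction of the standard argument behind that citation. Your organising device is the right one: fixing an expansion $C^\star$ of $C$ partitions $\Emb(A,C)=\bigsqcup_{i=1}^{t}\Emb(A_i^\star,C^\star)$ according to the pullback structure $f^{*}(C^\star)$, the upper bound then follows from a multi-template embedding version of \Cref{Ramsey-many-degs} applied inside $\Ccal^\star$ (which does generalise by the same induction), and the lower bound follows from the expansion property plus JEP exactly as you describe. Two points should be tightened. First, in the lower-bound colouring you must colour $f$ by the \emph{literal} pullback expansion $f^{*}(C^\star)$ on the universe of $A$, which equals exactly one $A_i^\star$, and not by its isomorphism type: if two of the $A_i^\star$ are isomorphic via a nontrivial automorphism of $A$, colouring by isomorphism type conflates them and only certifies the number of isomorphism classes of expansions, which undercounts the \emph{embedding} Ramsey degree (compare \Cref{fact:Andys equality}); your phrase ``indexed by the induced expansion'' suggests you intend equality, but the word ``isomorphic'' should be removed. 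Second, you silently use that $\Ccal$ and $\Ccal^\star$ have HP and JEP (to restrict an expansion of $B$ to the copies of the $B_i$, and to amalgamate the $B_i$ into one $B$) and that every structure in $\Ccal$ admits at least one expansion in $\Ccal^\star$; these are not literally contained in the paper's definition of an excellent pair, but they are standing hypotheses in \cite{Zucker2015}, where both classes are Fra\"iss\'e classes, so you should state that you are assuming them.
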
 

Let $\Hcal_k^o$ denote the class of all finite ordered $k$-hypergraphs. It is a well-known fact (due to \cite{AH_1978}, but also proved in \cite{NesetrilRodl1989}) that $\Hcal_{k}^o$ has ERP. Given an ordered $k$-hypergraph $H = (V,R,\leq)$, we will write $E(H) = \left(V;R,R^{(k+1)},\leq\right)$, for the expansion of $H$ by the \emph{Kay}-graph it induces. We will write $E(\Hcal_{k}^o)$ for the class $\{E(H):H\in\Hcal_k^o\}$. Since, for all $H\in\Hcal_k^o$ we have that $E(H)$ is quantifier-freely definable from $H$, the fact that $\Hcal_{k}^o$ has ERP implies that $E(\Hcal_k^o)$ also has ERP. We will write $\L = \{S,\leq\}$ for the language of $K(\Hcal_k^o)$ and $\L^\star = \{R,S,\leq\}$ for the language of $E(\Hcal_o^k)$.

\begin{remark}\label{rmk:complement}
    Let $A = (V;R,S,\leq) \in E(\Hcal_k^o)$. We will denote by $\overline{A}$ the \emph{complement} of $A$, that is $\overline{A} = (V;\binom{V}{k}\setminus R,S',\leq)$, where $S' = (\binom{V}{k}\setminus R)^{(k+1)}$. It is easy to see, by unfolding the definitions that $S=S'$ if, and only if, $k$ is even, and $S' = \binom{V}{k+1}\setminus S$ if, and only if, $k$ is odd.
\end{remark}
\begin{proposition}\label{prop:excellent-pair}
    The pair $\left(E(\Hcal_o^k),K(\Hcal_o^k)\right)$ is excellent.
\end{proposition}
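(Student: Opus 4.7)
The plan is to verify the four conditions defining an excellent pair in turn: (i) $E(\Hcal_o^k)$ has ERP; (ii) it is a precompact expansion of $K(\Hcal_o^k)$; (iii) it is a reasonable expansion; and (iv) it satisfies the expansion property for $K(\Hcal_o^k)$. Condition (i) has already been observed just before the statement: since $S$ is quantifier-free definable from $R$ in $\Hcal_o^k$-structures, ERP for $\Hcal_o^k$ lifts to $E(\Hcal_o^k)$. Condition (ii) is immediate, as any finite $A\in K(\Hcal_o^k)$ has at most $2^{\binom{|V_A|}{k}}$ possible $k$-ary relational expansions.

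For condition (iii), given $A\subseteq B$ in $K(\Hcal_o^k)$ and an expansion $A^\star=(V_A;R_A,S_A,\leq_A)$, I will fix any $\tilde B\in\Hcal_o^k$ with $K(\tilde B)=B$, which exists by \Cref{prop:kay-graphs-parity}. Since both $R_{\tilde B}|_{V_A}$ and $R_A$ are $k$-hypergraphs on $V_A$ sharing the Kay-graph $S_A$, their symmetric difference $g_0:=R_{\tilde B}|_{V_A}\triangle R_A$ has trivial Kay-graph in the sense of \Cref{rmk:complement} (empty if $k$ is even, and all-true if $k$ is odd). The core step is to extend $g_0$ to a $k$-hypergraph $g$ on $V_B$ with the analogous trivial Kay-graph globally; this reduces to solving a consistent linear (affine, for $k$ odd) system over $\Fbb_2$, which I intend to do by adding one vertex of $V_B\setminus V_A$ at a time. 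Then $B^\star:=(V_B;R_{\tilde B}\triangle g,S_B,\leq_B)\in E(\Hcal_o^k)$ is the required expansion of $B$ restricting to $A^\star$.

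For condition (iv), given $A^\star\in E(\Hcal_o^k)$, I will pick a structure $\tilde B\in\Hcal_o^k$ large enough that any $2$-colouring of $\binom{V_B}{k}$ admits monochromatic copies of both $\tilde A:=(V_A;R_A,\leq_A)$ and its complement $\overline{\tilde A}$; this is ensured by ERP for $\Hcal_o^k$ together with \Cref{k-ERP-many} applied to the edge and non-edge $k$-element objects simultaneously. Set $B:=K(\tilde B)$. For any expansion $B^\star=(V_B;R_B,S_B,\leq_B)$, let $g:=R_B\triangle R_{\tilde B}$ and apply the Ramsey property to $\chi:=\mathbf{1}_g$. A monochromatic-$0$ copy of $\tilde A$ yields $A^\star\hookrightarrow B^\star$ directly, while a monochromatic-$1$ copy of $\overline{\tilde A}$ yields the same embedding via the identity $\overline{R_A}\triangle\binom{V_A}{k}=R_A$. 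For $k$ even with $|V_A|>k$, the parity constraint $K(g)=\emptyset$ forces the Ramsey copy of $\tilde A$ to have colour $0$, because an all-$1$ copy would contain a $(k+1)$-subset of $f(V_A)$ with $k+1$ (odd) many $g$-hyperedges, contradicting $K(g)=\emptyset$.

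The main obstacle I anticipate is the case $k$ odd: here the parity argument does not force the ``correct'' monochromatic colour, so in principle both the $\tilde A$-Ramsey copy and the $\overline{\tilde A}$-Ramsey copy could land on the ``wrong'' parity (colour $1$ for $\tilde A$ and colour $0$ for $\overline{\tilde A}$) and yield only an embedding of $\overline{A^\star}$. My plan for this case is to exploit the additional structural constraint that $K(g)$ is the all-true relation for $k$ odd together with the duality sending $g$ to its complement $\overline g$ (which also satisfies $K(\overline g)$ all-true, since the total count in any $(k+1)$-subset is $k+1$, even), in order to rule out the obstructive configuration. If no direct finite argument succeeds, I will fall back on compactness via the Fra\"iss\'e limit: by \Cref{fact:fraisse-reasonable} and condition (iii), $\Flim(E(\Hcal_o^k))_{\restr\L}=\Flim(K(\Hcal_o^k))$, and it then suffices to show that every $E(\Hcal_o^k)$-expansion of $\Flim(K(\Hcal_o^k))$ has ordered $k$-hypergraph reduct of age $\Hcal_o^k$, from which the finite expansion property follows by a standard compactness argument.
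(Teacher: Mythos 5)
Conditions (i) and (ii) of your plan coincide with the paper's treatment. Your argument for reasonability is a genuinely different route: the paper disposes of it in one line via \Cref{prop:k-graphs-fraisse} and \Cref{fact:fraisse-reasonable}, whereas you work directly with the ``switching'' relation $g_0=R_{\tilde B}|_{V_A}\triangle R_A$. Your observation that $g_0$ has trivial \emph{Kay}-graph is correct (the parity counts add over a symmetric difference), and the extension step you defer to a ``consistent linear system'' can indeed be completed: the $k$-hypergraphs with trivial \emph{Kay}-graph are exactly the kernels of the mod-$2$ coboundary map into $(k+1)$-sets, which by acyclicity of the full simplex over $\Fbb_2$ coincide with the coboundaries of $(k-1)$-ary relations, and a coboundary on $V_A$ extends to one on $V_B$ by extending the underlying $(k-1)$-ary relation arbitrarily. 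But as written this step is asserted, not proved.

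The genuine gap is in the expansion property, and you have located it yourself: the case $k$ odd. There, the only constraint on $g=R_B\triangle R_{\tilde B}$ is that every $(k+1)$-set carries an even number of $g$-edges, which is compatible with $g$ being a nonconstant coboundary; neither the ``duality'' with $\overline g$ nor the compactness fallback supplies the missing combinatorial input --- the latter merely restates the expansion property as a statement about expansions of $\Flim(K(\Hcal_o^k))$ without making it any easier. There is also a smaller flaw even for $k$ even: ``a $2$-colouring of $\binom{V_B}{k}$'' is not a single Ramsey colouring, since $k$-edges and $k$-non-edges are distinct isomorphism types; \Cref{k-ERP-many} gives constancy on each type separately, with possibly different constants, and the mixed outcomes $(0,1)$ and $(1,0)$ reopen exactly the bad cases you believed the parity constraint had excluded. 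The paper's resolution, which your cocycle analysis is one step away from, is to move the colouring down one arity: it adjoins an apex vertex $\star$ joined to every $(k-1)$-set and colours the $(k-1)$-element substructures of a copy of $A^\star\sqcup\overline{A^\star}$ (a single isomorphism type, namely a linear order) according to their relation to $\star$ in the arbitrary expansion $B^\star$. In your language this amounts to writing $g$ as the coboundary of a $(k-1)$-ary relation $h$ and applying ERP of $\Hcal_o^k$ to $h$ rather than to $g$: once $h$ is constant on the copy, $g$ is constant there with value $0$ or $k\bmod 2$, so for every parity of $k$ one of the two halves $A^\star$ or $\overline{A^\star}$ carries the required embedding. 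Without this (or an equivalent) device, your proof of the expansion property does not go through.
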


\begin{proof}
    We know that $E(\Hcal_o^k)$ has ERP, so we need to show that the expansion $E(\Hcal_o^k)$ of $K(\Hcal_o^k)$ is reasonable, precompact and that the pair $\left(E(\Hcal_o^k),K(\Hcal_o^k)\right)$ has the expansion property. 
    
    Reasonability follows immediately from \Cref{prop:k-graphs-fraisse} and \Cref{fact:fraisse-reasonable}. To see that the expansion is precompact, observe that for all $A\in K(\Hcal_o^k)$ the number of $k$-hyperedge relations that we could expand $A$ by (not necessarily so that the resulting expansion is in $E(\Hcal_o^k)$) is precisely $2^{|\binom{V(A)}{k}|}$, and hence $\{A^\star\in E(\Hcal_o^k):A = A^\star_{\restr\L}\}$ is finite.

    It remains to show that $\left(E(\Hcal_o^k),K(\Hcal_o^k)\right)$ has the expansion property. Let $A^\star\in E(\Hcal_o^k)$. We need to show that there is some $B \in K(\Hcal_o^k)$ such that for any expansion $B^\star\in E(\Hcal_o^k)$ of $B$, there is an embedding $f: A^\star\into B^\star$. To this end, let $\tilde{A} := A^\star\sqcup \overline{A^\star}$, where $\overline{A^\star}$ is the complement of $A^\star$, as discussed in \Cref{rmk:complement}.

    Now, since $E(\Hcal_k^o)$ has ERP, let $D\in E(\Hcal_k^o)$ be such that $D\to\big(\tilde{A}\big)^{([k-1],\leq)}_2$ and take $D^\star = D\sqcup\{\star\}$, where $\star$ is some new vertex, not in $V(D)$, with hyperedge relation given by $\Scale[0.9]{R(D)\sqcup\left\{\{\star\}\sqcup\{x_1,\dots,x_{k-1}\}:\{x_1,\dots,x_{k-1}\}\in\binom{V(D)}{k-1}\right\}}$. Observe that by construction of $D^\star$, for any $x_1,\dots,x_k\in V(D)$ we have that:
    \begin{equation}\label{eq:kay-graph-in-d-star}
        D^\star\vDash R^{(k+1)}(x_1,\dots,x_{k},\star)\text{ if, and only if, } D^\star\vDash R(x_1,\dots,x_k).
    \end{equation}
		
    To see this, note that $D^\star\vDash R^{(k+1)}(x_1,\dots,x_{k},\star)$ if, and only if the cardinality of the set $|\left\{\left\{i_1,\dots,i_k\right\}\in\binom{[k+1]}{k}:\{x_{i_1},\dots,x_{i_k}\}\in R\right\}|$, where $x_{k+1}=\star$ is congruent to $k+1$ modulo $2$, but:
    \[
    \Scale[0.85]{
    \begin{aligned}
        |\bigg\{\left\{i_1,\dots,i_k\right\}\in\binom{[k+1]}{k}&:\{x_{i_1},\dots,x_{i_k}\}\in R\bigg\}| \\
        &= |\left\{\left\{i_1,\dots,i_{k-1}\right\}\in\binom{[k]}{k-1}:\{x_{i_1},\dots,x_{i_{k-1}},\star\}\in R\right\}| + \mathsf{t}
    \end{aligned}
    }
    \]
    where $\mathsf{t}=1$ if, and only if $D^\star\vDash R(x_1,\dots,x_k)$. Since in $D^\star$, the vertex $\star$ is connected with all $(k-1)$-element subsets, we have that
    \[
        |\left\{\left\{i_1,\dots,i_{k-1}\right\}\in\binom{[k]}{k-1}:\{x_{i_1},\dots,x_{i_{k-1}},\star\}\in R\right\}| = \binom{k}{k-1} = k.
    \]
    Thus $D^\star\vDash R^{(k+1)}(x_1,\dots,x_{k},\star)$ if, and only if $\mathsf{t}=1$, as claimed. 
		
    We claim that $B = D^\star_{\restr\L}$ witnesses the expansion property for $A$. Indeed, let us write $B = (V(D)\sqcup\{\star\},S,\leq)$, and let $B^\star = (V(D)\sqcup\{\star\};R^\star,S,\leq)\in E(\Hcal_o^k)$ be some expansion of $B$.  Observe that the $(k+1)$-hyperedge relation $S$ of $B^\star$ is precisely the \emph{Kay}-graph relation of $D^\star$. In particular, for all $x_1,\dots,x_k\in V(D)$ we have that:
    \begin{equation}\label{eq:d-star-b-star}
        D^\star\vDash R^{(k+1)}(x_1,\dots,x_k,\star)\text{ if, and only if, } B^\star\vDash S(x_1,\dots,x_k,\star).
    \end{equation}
    We need to show that there is an embedding $f:A^\star\into B^\star$. Consider the following colouring of the $(k-1)$-element substructures of $D\subseteq D^\star$ (these are just linear orders), which depends on the expansion $B^\star$ of $B$:
    \[
    \begin{aligned}
        c:\binom{D}{([k-1],\leq)}   &\to \{1,2\}\\
        (d_1\leq\cdots\leq d_{k-1})	&\mapsto 
                        \begin{cases}
                                    1 &\text{ if } \{d_1,\dots,d_{k-1},\star\}\in R^\star \\
								2 &\text{ otherwise}.
					\end{cases}
    \end{aligned}
    \]
    Since $D\to\big(\tilde{A}\big)^{([k-1],\leq)}_2$, there is some $\tilde{A}'\in\binom{D^\star\setminus\{\star\}}{\tilde{A}}$ such that $c_{\restr{\tilde{A'}}}$ is constant. By construction of $\tilde{A}$ we know that $\tilde{A}' \iso A^\star \sqcup \overline{A^\star}$. To simplify notation, let us write $A^\star \sqcup \overline{A^\star}$ for $\tilde{A}'\subseteq D^\star$. We consider the two possible outcomes separately:

    \emph{Case 1.} $\mathsf{im}(c_{\restr{\tilde{A'}}}) = \{1\}$. In this case we claim that for all $\{x_1,\dots,x_{k}\}\in \binom{A^\star}{k}$ we have that:
    \[
        D^\star\vDash R(x_1,\dots,x_k)\text{ if, and only if, } B^\star\vDash R^\star(x_1,\dots,x_k).
    \]
    By \Cref{eq:kay-graph-in-d-star,eq:d-star-b-star} we have that $D^\star\vDash R(x_1,\dots,x_k)\text{ if, and only if, } B^\star\vDash S(x_1,\dots,x_k,\star)$, thus, it suffices to show that:
    \[
        B^\star\vDash S(x_1,\dots,x_k,\star)\text{ if, and only if } B^\star\vDash R^\star(x_1,\dots,x_k)
    \]
    By definition, after putting $x_{k+1}=\star$ we have that $B^\star\vDash S(x_1,\dots,x_k,\star)$ if, and only if, $|\left\{\left\{i_1,\dots,i_k\right\}\in\binom{[k+1]}{k}:\{x_{i_1},\dots,x_{i_k}\}\in R^\star\right\}$ is congruent to $k+1$ modulo $2$, but:
    \[
    \Scale[0.85]{
    \begin{aligned}
        |\bigg\{\left\{i_1,\dots,i_k\right\}\in\binom{[k+1]}{k}&:\{x_{i_1},\dots,x_{i_k}\}\in R^\star\bigg\}| \\
        &= |\left\{\left\{i_1,\dots,i_{k-1}\right\}\in\binom{[k]}{k-1}:\{x_{i_1},\dots,x_{i_{k-1}},\star\}\in R^\star\right\}| + \mathsf{t},
    \end{aligned}
    }
    \]
    where $\mathsf{t}=1$ if, and only if, $B^\star\vDash R^\star(x_1,\dots,x_k)$. Since $\mathsf{im}(c_{\restr{\tilde{A'}}}) = \{1\}$ we have that
    \begin{equation}\label{eq:case-i}
    \Scale[0.9]{
        |\left\{\left\{i_1,\dots,i_{k-1}\right\}\in\binom{[k]}{k-1}:\{x_{i_1},\dots,x_{i_{k-1}},\star\}\in R^\star\right\}| = \binom{k}{k-1}= k
    }
    \end{equation}
    So, we conclude that:
    \[
        B^\star\vDash S(x_1,\dots,x_k,\star)\text{ if, and only if } B^\star\vDash R^\star(x_1,\dots,x_k),
    \]
    as claimed.
			
   \emph{Case 2.} $\mathsf{im}(c_{\restr{\tilde{A'}}}) = \{2\}$. The argument will now vary slightly, depending on whether $k$ is even or odd. We consider the two cases separately: 
			
   \emph{Assume $k$ is even}. In this case we again claim that for all $\{x_1,\dots,x_{k}\}\in \binom{A^\star}{k}$ we have that:
    \[
        D^\star\vDash R(x_1,\dots,x_k)\text{ if, and only if, } B^\star\vDash R^\star(x_1,\dots,x_k).
    \]
    The argument here is identical to the one given in Case 1, above, with the only difference that instead of \Cref{eq:case-i}, we have that:
    \[
        |\left\{\left\{i_1,\dots,i_{k-1}\right\}\in\binom{[k]}{k-1}:\{x_{i_1},\dots,x_{i_{k-1}},\star\}\in R^\star\right\}| = 0,
    \]
   since $\mathsf{im}(c_{\restr\tilde{A'}}) = \{2\}$. Again, this forces $\mathsf{t}=1$, and the result follows as before.
								   		
    \emph{Assume $k$ is odd}. In this case we claim that for all $\{x_1,\dots,x_{k}\}\in \binom{\overline{A^\star}}{k}$ we have that:
    \[
        D^\star\vDash \lnot R(x_1,\dots,x_k)\text{ if, and only if, } B^\star\vDash R^\star(x_1,\dots,x_k).
    \]
    Again, by \Cref{eq:kay-graph-in-d-star,eq:d-star-b-star} it suffices to show that:
    \[
        B^\star\vDash S(x_1,\dots,x_k,\star)\text{ if, and only if } B^\star\vDash \lnot R^\star(x_1,\dots,x_k)
    \]
    Arguing as in Case 1, $B^\star\vDash S(x_1,\dots,x_k,\star)$ if, and only if,
    \[
        |\left\{\left\{i_1,\dots,i_{k-1}\right\}\in\binom{[k]}{k-1}:\{x_{i_1},\dots,x_{i_{k-1}},\star\}\in R^\star\right\}| + \mathsf{t} \equiv 0\pmod{2}
    \]
    where $\mathsf{t}=1$ if, and only if $B^\star\vDash R^\star(x_1,\dots,x_k)$. Since $\mathsf{im}(c_{\restr{\tilde{A'}}}) = \{1\}$, just like the even case, we have that
    \[
        |\left\{\left\{i_1,\dots,i_{k-1}\right\}\in\binom{[k]}{k-1}:\{x_{i_1},\dots,x_{i_{k-1}},\star\}\in R^\star\right\}| = 0
    \]
    and hence $B^\star\vDash S(x_1,\dots,x_k,\star)$ holds if, and only if, $\mathsf{t}=0$. So, we conclude that:
    \[
        B^\star\vDash S(x_1,\dots,x_k,\star)\text{ if, and only if } B^\star\vDash \lnot R^\star(x_1,\dots,x_k),
    \]
    as claimed.
\end{proof}

We almost immediately conclude the following:

\begin{corollary}\label{cor:main}
    Let $k\in\Nbb_{\geq 3}$. Then $K(\Hcal_o^k)$ has $(k-1)$-ERP but not $k$-ERP
\end{corollary}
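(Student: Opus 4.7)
The plan is to combine \Cref{prop:excellent-pair} with \Cref{fact:rd-in-excellent-pairs}: since the pair $(E(\Hcal_o^k), K(\Hcal_o^k))$ is excellent, the (structural) Ramsey degree of any $A \in K(\Hcal_o^k)$ equals the number of expansions of $A$ to $E(\Hcal_o^k)$. Because every structure in $K(\Hcal_o^k)$ carries a linear order $\leq$, it is rigid, so structural and embedding Ramsey degrees coincide (by \Cref{fact:Andys equality}), and $A$ is a Ramsey object precisely when this count equals $1$. Concretely, an expansion of $A = (V; S, \leq)$ to $E(\Hcal_o^k)$ amounts to choosing a $k$-ary relation $R \subseteq \binom{V}{k}$ with $R^{(k+1)} = S$, so we just need to count such $R$ when $|V|=k-1$ and when $|V|=k$.

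For $(k-1)$-ERP, fix any $A \in K(\Hcal_o^k)$ with $|V(A)| = k-1$. Then $\binom{V(A)}{k} = \emptyset$, so the only candidate is $R = \emptyset$; moreover $S(A) = \emptyset$ is forced since $|V(A)| < k+1$, and $R^{(k+1)} = \emptyset = S(A)$ holds vacuously. Hence there is a unique expansion and the Ramsey degree of $A$ equals $1$, so $A$ is a Ramsey object. As this is true for every $A \in K(\Hcal_o^k)$ of size $k-1$, the class has $(k-1)$-ERP.

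For the failure of $k$-ERP, take any $A \in K(\Hcal_o^k)$ with $|V(A)| = k$; again $S(A) = \emptyset$ is forced. Now $\binom{V(A)}{k}$ has exactly one element, yielding exactly two candidate $k$-hyperedge relations, namely $R = \emptyset$ and $R = \binom{V(A)}{k}$, and both give $R^{(k+1)} = \emptyset = S(A)$ since $\binom{V(A)}{k+1} = \emptyset$. Thus $A$ admits two expansions in $E(\Hcal_o^k)$, so its Ramsey degree is $2 > 1$ and $A$ is not a Ramsey object. Consequently $K(\Hcal_o^k)$ fails $k$-ERP.

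I do not anticipate a serious obstacle; all the heavy lifting is already done in \Cref{prop:excellent-pair}. The only conceptual point worth emphasising is that having Ramsey degree strictly greater than $1$ is genuinely an obstruction to being a Ramsey object (since, by \Cref{fact:rd-in-excellent-pairs}, Ramsey degrees are exact, not merely upper bounds), so the two expansions in the $|V(A)|=k$ case truly force non-trivial colourings that cannot be made monochromatic.
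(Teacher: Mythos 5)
Your proposal is correct and follows essentially the same route as the paper: invoke \Cref{prop:excellent-pair} and \Cref{fact:rd-in-excellent-pairs} to identify the Ramsey degree of $A$ with its number of expansions in $E(\Hcal_o^k)$, then count expansions for $|V(A)|=k-1$ (one) and $|V(A)|=k$ (two). Your extra remarks --- that rigidity of ordered structures makes structural and embedding degrees coincide, and that the degree formula is exact rather than an upper bound --- are correct points of care that the paper leaves implicit.
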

\begin{proof}
    By \Cref{prop:excellent-pair}, $\left(E(\Hcal_o^k),K(\Hcal_o^k)\right)$ is an excellent pair, so applying \Cref{fact:rd-in-excellent-pairs}, for all $A=(V,S,\leq)\in K(\Hcal_o^k)$ we have that the Ramsey degree of $A$ is equal to the number of expansions of $A$ in $E(\Hcal_o^k)$. Observe that if $|V(A)|\leq k$ then $S(A)=\emptyset$, since hypergraphs are assumed to be uniform. Now, if $|V(A)|\leq k-1$, then $A$ has a unique (trivial) expansion in $E(\Hcal_o^k)$, namely $A^\star = (V;\emptyset,\emptyset,\leq)$. On the other hand, if $|V(A)|=k$, then $A$ has two expansions in $E(\Hcal_o^k)$, namely $A_1^\star = (V;\{V\},\emptyset,\leq)$ and $A_2^\star = (V;\emptyset,\emptyset,\leq)$.
\end{proof}

\begin{remark}\label{rmk:structural}
    Since the classes in \Cref{cor:main}, above, consist of ordered structures, the embedding Ramsey property and the structural Ramsey property coincide. Thus, if one considers finitary approximations to the \emph{structural} Ramsey property, instead of the embedding Ramsey property, \Cref{cor:main} is still applicable.
\end{remark}

\section{Orders in Ramsey classes}\label{sec:elem-ord}

In this \namecref{sec:elem-ord} we prove \Cref{thmx:order}. Let us start by formally defining the notions we require.

\begin{definition}[$(\infty,0)$-orderability]\label{def:orderability}
    Let $\C$ be a class of $\L$-structures. We say that $\C$ is \emph{$(\infty,0)$-orderable} if there is a (possibly infinite) Boolean combination of atomic formulas $\Phi(x,y):=\bigvee_{i\in I}\bigwedge_{j\in J_i} \vphi_{j_i}^{(-1)^{n_{j_i}}}(x,y)$, such that $\Phi$ is a linear order for every structure in $\C$.
\end{definition}

The term $(\infty,0)$-orderable is meant to evoke a connection with infinitary logic, which is not crucial to this paper. For more information, we refer the reader to \cite{MP}. 

\begin{theorem}\label{thm:2-ERP-implies-order}
    Let $\C$ be a class of finite structures. If $\C$ has $2$-ERP, then $\C$ is $(\infty,0)$-orderable. 
\end{theorem}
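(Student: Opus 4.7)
The plan is a compactness argument on orientations of 2-types, with the nonemptiness of each coordinate coming from an auxiliary-order trick based on \Cref{k-ERP-many}.

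Let $\Pi$ denote the set of quantifier-free 2-types realised in some $M\in\Ccal$. By 2-ERP, every 2-element substructure of every $M\in\Ccal$ is rigid, so each $p\in\Pi$ differs from its transpose $p^*(x,y):=p(y,x)$. Set $T:=\{\{p,p^*\}:p\in\Pi\}$ and fix, once and for all, an arbitrary labelling $\tau=\{\tau^+,\tau^-\}$ of each $\tau\in T$. For each $f\in X:=\{+,-\}^T$, the $(\infty,0)$-Boolean combination of atomic formulas
\[
\Phi_f(x,y)\ :=\ \bigvee_{\tau\in T}\, \big(\qftp(x,y)=\tau^{f(\tau)}\big)
\]
is well-defined. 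For each $M\in\Ccal$ let $U_M\subseteq X$ be the set of $f$'s such that $\Phi_f\restr M$ is a linear order on $M$; since only the finitely many 2-types realised in $M$ matter, $U_M$ is clopen. As $X$ is compact (Tychonoff), producing a global $f\in\bigcap_{M\in\Ccal} U_M$ reduces, via the finite intersection property, to showing $\bigcap_{i=1}^n U_{M_i}\neq\emptyset$ for every finite $\{M_1,\dots,M_n\}\subseteq\Ccal$.

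For a single $M$ the nonemptiness of $U_M$ is the heart of the argument, handled by a local construction. List the rigid 2-element substructures of $M$ as $A_1,\dots,A_k$ and apply \Cref{k-ERP-many} with $B=M$ to obtain $C\in\Ccal$ with the simultaneous Ramsey property for the $A_i$'s. Pick \emph{any} linear order $<$ on $V(C)$---an auxiliary, non-structural choice---and, for each $i$, 2-colour each element of $\binom{C}{A_i}$ by the relative $<$-order of its two elements (under the unique rigid labelling). The corollary yields a copy $\widetilde M\cong M$ inside $C$ on which all $k$ colourings are simultaneously constant, and the $k$ constants determine an $f\in U_M$: by construction $\Phi_f\restr\widetilde M$ literally coincides with $<\restr V(\widetilde M)$, which is a linear order, and this transfers to $M$ along the isomorphism. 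Running the same construction on any $B\in\Ccal$ that jointly embeds $M_1,\dots,M_n$ (applied to the combined list of 2-substructures) would immediately close the FIP step.

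The main obstacle is therefore the existence of this joint structure $B$, since an arbitrary 2-ERP class need not enjoy a joint embedding property. I expect to resolve this by a direct 2-ERP argument: the $U_{M_i}$'s can only be genuinely incompatible on a shared unordered 2-type $\tau$ if some $M\in\Ccal$ contains a 3-element substructure $\{a,b,c\}$ all of whose pair-types coincide and whose three ordered pair-types $\qftp(a,b),\qftp(b,c),\qftp(c,a)$ agree, forming a directed $\tau$-cycle. Such a ``cyclic triangle'' is directly forbidden by 2-ERP: in any $C\in\Ccal$ containing it, 2-colouring ordered pairs of type $\tau$ according to ``ascending vs.\ descending'' in an auxiliary linear order on $V(C)$ gives a colouring with no monochromatic copy of the cycle (since any 3-cycle in a linearly ordered set must use both an ascending and a descending edge), contradicting the Ramsey property of the 2-element structure of type $\tau$. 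With the FIP secured, compactness yields a global $f\in\bigcap_{M\in\Ccal} U_M$, and $\Phi_f$ is the desired $(\infty,0)$-definable linear order on every structure in $\Ccal$.
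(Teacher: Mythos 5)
Your overall architecture is sound and, for the core step, identical to the paper's: put an auxiliary linear order on the $C$ produced by \Cref{k-ERP-many}, colour each $2$-element substructure-type by the relative order of its (rigidly labelled) points, and read off an orientation of the $2$-types from a simultaneously monochromatic copy. Your topological compactness on $\{+,-\}^T$ is just a repackaging of the paper's first-order compactness with diagrams, and your proof that each individual $U_M$ is nonempty is correct. The problem is the finite-intersection step. The paper closes it by invoking JEP (running the single-structure argument inside a $B$ that jointly embeds $M_1,\dots,M_n$); you correctly notice that JEP is not among the stated hypotheses and try to replace it, but the replacement does not work.

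Your claim is that $\bigcap_i U_{M_i}=\emptyset$ can only happen if some structure contains a monochromatic directed triangle on a single unordered $2$-type $\tau$. That targets the wrong obstruction. First, such a triangle would make a \emph{single} $U_M$ empty, which you have already ruled out; it cannot be the source of incompatibility between nonempty $U_{M_i}$'s. Second, no single coordinate of $f$ is ever forced to one value by any $M$: each $U_M$ is invariant under the global flip $f\mapsto\bar f$ (reversing a linear order gives a linear order), so ``incompatibility on a shared $2$-type $\tau$'' literally cannot occur. The genuine danger is a conflict of \emph{correlations} between distinct types: a triangle in $M_1$ with pair-types $\tau_1,\tau_1,\tau_2$ and a suitable realisation pattern forbids exactly the two assignments with $f(\tau_1)=f(\tau_2)$ (the two $3$-cycles), hence forces $f(\tau_1)\neq f(\tau_2)$, while a differently realised triangle in $M_2$ forces $f(\tau_1)=f(\tau_2)$; then $U_{M_1}\cap U_{M_2}=\emptyset$ with both sets nonempty and no monochromatic cycle anywhere. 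If $M_1$ and $M_2$ jointly embed in some $B\in\Ccal$ this is impossible because $U_B$ would be empty, but that is exactly the JEP you set out to avoid, and your cyclic-triangle argument gives no handle on it. You should either add JEP to your hypotheses (as the paper's own proof implicitly does) or supply a genuinely new argument for the multi-structure case; as written, the last paragraph is an unproved and, in the form stated, incorrect reduction.
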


\begin{proof}
    By using one of the techniques of \cite{MP} (the \emph{quantifier-free type Morleyisation}), we may assume that in all structures in $\C$ all quantifier-free types that are realised are isolated. Let $\cQ$ be the set of quantifier-free $\cL$-formulas in two variables isolating every quantifier-free type in two variables which is realised in some member of $\C$.
        
    Abusing notation, let $\C/\cong$ be a set of representatives of structures in $\C$ under the equivalence relation of isomorphism. Let 
    \[
        \cL':= \cL\cup\{<\}\cup\bigcup_{A\in \C/\cong} A,
    \]
    where $<$ is a binary relation symbol $2$ and each $a\in \bigcup_{A\in \C/\cong} A$ is a constant symbol and let $T$ be the following $\cL'$-theory:
    \[ 
        \bigcup_{A\in \C/\cong}\Diag(A)\cup \{\left(Q(x_1,y_1)\land Q(x_2,y_2)\right)\rightarrow \left( x_1<y_1\leftrightarrow x_2<y_2 \right): Q\in\cQ\}\cup \mathsf{LO} 
    \]
    where $\mathsf{LO}$ expresses that $<$ is a linear order. Then the \namecref{thm:2-ERP-implies-order} follows if $T$ is finitely satisfiable. By JEP, it suffices to show that: 
    \[ 
        \Diag(B)\cup \{\left(Q(x_1,y_1)\land Q(x_2,y_2)\right)\rightarrow \left( x_1<y_1\leftrightarrow x_2<y_2 \right): Q\in\cQ\}\cup \mathsf{LO} 
    \]
    is satisfiable for every $B\in \C$.

    Given $B\in \C$, let $A_1,\dots, A_n$ be the set of all substructures of $B$ of size $2$, and for each $i\leq n$ fix an enumeration $\{a_i^1,a_i^2\}$ of $A_i$.
        
    Let $C\in \C$ be as promised from \Cref{k-ERP-many}. Now let $\widehat{C}$ be an expansion of $C$ to $\cL\cup\{<\}$ such that $<$ is a linear order and for every $i\leq n$, define a colouring:
    \[
        \begin{aligned}
            \chi_i: \binom{C}{A_i} & \to \{0,1\}\\
            \{a_i^1,a_i^2\}        & \mapsto 
                        \begin{cases}
                            0 &\text{ if }a_i^0<a_i^1,\\
                            1 &\text{ if }a_i^1<a_i^0,
                        \end{cases}
        \end{aligned}
    \]
    where the ordering is taken in $\widehat{C}$. Let $B^\prime\in\binom{\widehat{C}}{B}$ be such that $\chi_i$ is constant on $\binom{B^\prime}{A_i}$, for all $i\leq n$. The resulting $B'$, with the ordering $<$ inherited from $\widehat{C}$ satisfies
    \[ 
        \Diag(B)\cup \{\left(Q(x_1,y_1)\land Q(x_2,y_2)\right)\rightarrow \left( x_1<y_1\leftrightarrow x_2<y_2 \right): Q\in\cQ\}\cup \mathsf{LO}.
    \]
    Indeed, suppose that we are given $b_1,b_2\in B^\prime$, then, for some $i$ we have that $\{b_1,b_2\} = A_i$. Then for all $c_1,c_2\in B^\prime$, if we have that $B^\prime\vDash Q(b_1,b_2)\land Q(c_1,c_2)$, for all $Q\in\cQ$ then $\{c_1,c_2\}\iso A_i$, hence $\chi_i(\{b_1,b_2\}) = \chi_i(\{c_1,c_2\})$. But since ${\chi_i}_{\restr{\binom{B^\prime}{A_i}}}$ is constant, we must have that $b_1,b_2$ and $c_1,c_2$ have the same order. 
\end{proof}
    
\begin{proposition}\label{qford rigid}
    Let $\C$ be a class of finite $\L$-structures. If $\C$ is $(\infty,0)$-orderable, then every $A\in \C$ is rigid.
\end{proposition}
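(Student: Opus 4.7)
The plan is to argue by contraposition: I will show that if some $A\in\C$ admits a non-trivial automorphism, then no Boolean combination of atomic formulas can define a linear order on $A$. The key observation is simply that automorphisms of a structure preserve every atomic formula, and hence every (possibly infinite) Boolean combination of atomic formulas.

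More precisely, I would proceed as follows. Fix $A\in\C$ and suppose toward a contradiction that $\sigma\in\Aut(A)$ is non-trivial. Let $\Phi(x,y)=\bigvee_{i\in I}\bigwedge_{j\in J_i}\vphi_{j_i}^{(-1)^{n_{j_i}}}(x,y)$ be the (possibly infinite) Boolean combination of atomic formulas witnessing the $(\infty,0)$-orderability of $\C$, so that $\Phi$ defines a linear order $<_A$ on $A$. Since $\sigma$ is an automorphism, for every atomic $\vphi(x,y)$ and every $a,b\in A$ we have $A\vDash \vphi(a,b)$ iff $A\vDash \vphi(\sigma(a),\sigma(b))$; the same equivalence is inherited by negations, conjunctions, and disjunctions of atomic formulas, of any (possibly infinite) arity. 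Consequently, for all $a,b\in A$,
\[
A\vDash \Phi(a,b)\iff A\vDash \Phi(\sigma(a),\sigma(b)),
\]
which is to say that $\sigma$ is an order-automorphism of $(A,<_A)$.

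Since $A$ is finite and $<_A$ is a linear order, the only order-automorphism of $(A,<_A)$ is the identity. Hence $\sigma=\id_A$, contradicting the assumption that $\sigma$ is non-trivial. Thus every $A\in\C$ is rigid.

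I do not anticipate any serious obstacle here; the proof is essentially a one-line observation once one notices that preservation under automorphism extends from atomic formulas to arbitrary (infinite) Boolean combinations thereof, coupled with the elementary fact that a finite linear order has no non-trivial automorphism. The only thing worth being mildly careful about is making explicit that the preservation property is indeed closed under infinite Boolean operations, but this is immediate from the semantics: $A\vDash\bigvee_{i\in I}\psi_i(\bar a)$ iff there exists $i\in I$ with $A\vDash\psi_i(\bar a)$, and analogously for conjunctions, so applying $\sigma$ commutes with these operations term-by-term.
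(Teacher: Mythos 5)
Your proof is correct and is essentially the same argument as the paper's: both rest on the observation that any automorphism of $A$ preserves the (possibly infinite) Boolean combination $\Phi$ of atomic formulas and hence the linear order it defines, together with the fact that a finite linear order admits no non-trivial order-automorphism. The paper merely packages this by expanding $A$ with a new order symbol interpreted via $\Phi$ and noting $\Aut(A)=\Aut(\widehat{A})$, which is a cosmetic difference from your direct contrapositive phrasing.
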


\begin{proof}
    Let $A\in \C$ and let $\widehat{A}$ be the expansion of $A$ to $\cL\cup\{\preceq\}$ where $\preceq$ is a binary relation symbol and for all $a,b\in A$ we have that $a\preceq b$ if, and only if $\Phi(a,b)$, where $\Phi$ is a Boolean combination of atomic and negated atomic formulas linearly ordering all structures in $\C$. Then $\aut(A)=\aut(\widehat{A})$, but since $\widehat{A}$ is a finite linearly ordered structure, it is rigid, and hence $A$ must also be rigid.
\end{proof}

We conclude our paper with an example of a class of finite rigid structures which is not $(\infty,0)$-orderable. This example is due to P. Cameron and was communicated to us by D. Macpherson: 

\begin{example}\label{example:rigid not ordered finite language}
    Let $\L := \{R,C\}$, where $R$ is a binary relation symbol and $C$ is a ternary relation symbol. Let $\C$ be the class of finite $\L$ structures where $R$ is a tournament and $C$ is a $C$-relation derived from a binary tree (see \cite{Cameron_1990} for the relevant definitions). It is an easy observation that all elements of $\C$ are rigid (since the automorphism groups of finite structures with $C$-relations derived from binary trees are $2$-groups and the automorphism groups of tournaments have odd order). It is also fairly clear that $\C$ is an amalgamation class. If $\cM = \Flim(\Ccal)$ admitted an $\aut(\cM)$-invariant linear order $\preceq$, then by ultrahomogeneity and antisymmetry, $\preceq$ would be a union of quantifier-free types in two variables $x,y$. Therefore $\preceq$ would be definable using only equality and $R$. So if $\cM_2:=\cM_{\restr\{R\}}$, then $\preceq$ would be $\aut(\cM_2)$-invariant. Notice that $\cM_2$ is the random tournament, which doesn't admit an invariant linear order (e.g., a $3$-cycle isn't rigid).
\end{example}

\subsection*{Acknowledgements} The authors would like to thank D. Macpherson for his helpful suggestions and for pointing out \Cref{example:rigid not ordered finite language}. The authors would also like to thank Instytut Matematyczny, Uniwersytet Wrocławski for funding and hosting the second author in June 2023, when part of the research in this paper was conducted.

\bibliographystyle{alpha}
\bibliography{bibliography}

\end{document}